\documentclass{amsart}
\usepackage{graphicx}
\usepackage{cite}
\usepackage{amssymb}
\vfuzz2pt 
\hfuzz2pt 
\newtheorem{thm}{Theorem}[section]
\newtheorem{cor}[thm]{Corollary}
\newtheorem{lem}[thm]{Lemma}

\theoremstyle{definition}
\newtheorem{defn}[thm]{Definition}
\theoremstyle{remark}
\newtheorem{rem}[thm]{Remark}
\numberwithin{equation}{section}

\begin{document}

\title[An inhomogeneous polyharmonic Dirichlet problem with $L^{p}$ data]{An inhomogeneous polyharmonic Dirichlet problem with $L^p$ boundary data in the upper half-plane}%
\author{Kanda Pan, Guoan Guo and Zhihua Du}%
\address{Department of Mathematics, Jinan University, Guangzhou 510632, China}%
\email{243642500@qq.com (P. Kan)}%
\address{Colledge of Mathematics, Nanjing University of Posts and Telecommunications, Nanjing 210023, China}%
\email{guoguoan@njupt.edu.cn(G. Guo)}%
\address{Department of Mathematics, Jinan University, Guangzhou 510632, China}%
\email{tzhdu@jnu.edu.cn (Z. Du)}%

\thanks{This work was carried out when the third named author visited Department of Mathematics, Temple University by the invitation from Prof. Irina Mitrea and the support from State Scholarship Fund Award of China. He was also partially supported by the NNSF grants (Nos. 11126065, 11401254). The second named author was partially supported by SRF of NJUPT(\#NY208070).}%
\subjclass{31B10, 31B30}%
\keywords{Dirichlet problem; polyharmonic equation; higher order Poisson kernels; higher order Pompeiu operators; inhomogeneous, non-tangential maximal functions}%

\dedicatory{Dedicate to Professor Dr. Heinrich Begehr\\ for his lasting contributions to the theory of integral representations}%
\begin{abstract}
  In this paper, it is investigated for an inhomogeneous Dirichlet problem with $L^p$ boundary data for polyharmonic equation in the upper half-plane. By using higher order Poisson kernels and Pompeiu operators, which are respectively due to Du, Qian and Wang [Z. Du, T. Qian and J. Wang, {\it $L^{p}$ polyharmonic Dirichlet problems in regular domains II: The upper half plane}, J. Differential Equations 252(2012), 1789-1812] as well as Begehr and Hile [H. Begehr and G. Hile, {\it A hierarchy of integral operators}, Rocky Mountain J. Math. 27(1997), 669-706], it is given that the unique integral representation solution under some certain estimates.
\end{abstract}
\maketitle
\section{Introduction}

In recent years, a great deal of activities were given to investigate boundary value problems (simply, BVPs) for higher order elliptic partial differential equations in various planar and higher dimensional domains. There were a lot of results exhibiting the development in this field \cite{bdw,bg,bh,bkpv,dqw1,dqw2,dqw3,dgw,dkw,du1,du2,imm,pv,pv1,pv2,pv3,pv4,v1,v2,v3}. Generally speaking, the development included two directions: one was to find out the explicit solutions for the model equations (such as biharmoic, polyharmonic, polyanalytic equations etc.) on some regular domains (for instance, the unit disc or ball, the upper half-plan or -space and so on); the other was to study the existence and uniqueness of solutions for general elliptic equations on general (non-smooth) domains (such as $C^{1}$, Lipschitz, non-tangentially accessible domains (see \cite{jk1}) and so on) under some different a priori estimates. Now in both directions many works are mainly concentrated on the study of BVPs with low regularity data on the coefficients of the equations and the boundary of the domains. With such view, this article is touching on a result in the former direction. More precisely, this article is devoted to solve the following inhomogeneous polyharmonic Dirichlet problems with $L^p$ boundary data in the upper half-plane, $\textbf{H}$, i.e.
\begin{equation}
\begin{cases}
\Delta^{n}u=g \ \ in \ \textbf{H},\\
\Delta^{j}u=f_{j} \ on \ \mathbb{R}\\
\end{cases}
\end{equation}
with $\|\textrm{M}(u-\widetilde{T}_{n,n,\mathbf{H}}g-\sum_{j=1}^{n-1}M_{j+1}[f_{j}-\widetilde{T}_{n-j, n-j, \mathbf{H}}g])\|_{L^{p}(\mathbb{R})}\leq C(\|f_{0}\|_{L^p(\mathbb{R})}+\|g\|_{L^{p}_{w}(\mathbf{H})})$, where $0\leq j\leq n-1$, $\textbf{H}=\{z\in \mathbb{C}: {\rm Re} z\in \mathbb{R}\,\, \mathrm{and}\,\, {\rm Im} z>0\}$, $\partial \textbf{H}=\mathbb{R}$, $\Delta(=\Delta_{z})=4\partial_{z}\partial_{\bar{z}}$ is the Laplacian with $\partial_{z}=\frac{1}{2}(\frac{\partial}{\partial_{x}}-i\frac{\partial}{\partial_{y}})$ and $\partial_{\bar{z}}=\frac{1}{2}(\frac{\partial}{\partial_{x}}+i\frac{\partial}{\partial_{y}})$, $z=x+iy\in \mathbf{H}$, and $\mathbb{R}$ is the real axis; $f_{j}\in L^p(\mathbb{R})$, $g\in L_{w}^{p}(\mathbf{H})$ which is a weighted $L^{p}$ space with certain weight $w$, $p>1$; $\widetilde{T}_{n-j,n-j,\mathbf{H}}$ and $M_{j}$ are respectively higher order Pompeiu operators  and Poisson integrals defined in Section 2 and 3;  $\mathrm{M}(u)$ is the non-tangential maximal function of $u$, which is usually defined by $$\textrm{M}(F)(x)=\sup\limits_{z\in \Gamma_{\alpha}(x)} |F(z)|\ \ \mathrm{for} \  x\in \partial \mathbb{R},$$ where $\Gamma_{\alpha}(x)$ is the non-tangential approach cone with the vertex at $(x, 0)$ and the aperture $\alpha>0$, viz.,
$$\Gamma_{\alpha}(x)=\{z\in \textbf{H}: |\mathrm{Re}z-x|<\alpha \mathrm{Im}z\}.$$
It is noteworthy that all the boundary data in BVP
(1.1) are non-tangential.

In 2008, Begehr, Du and Wang studied the same boundary value problem with H\"older continuous data on the unit disc but for the homogeneous polyharmonic equation \cite{bdw}. In their paper, Begehr, Du and Wang found that an integral representation solution for the problem could be given by some kernel functions satisfying some certain properties. Although there existed an inductive relation by the Laplacian among the kernel functions, it was hard to get the explicit expressions for the kernel functions and the integral representation solution by their method in terms of iterated poly-Cauchy integral operators. By studying the properties of the kernel functions stated in \cite{dgw} and introducing some new ideas, Du, Guo and Wang firstly gave the unified explicit expressions for all the kernels functions on the unit disc in terms of some convergent series (see \cite{dgw,du1} for details, however, more concisely and understandably appeared in \cite{dkw}). From then on, such kernel functions were called higher order Poisson kernels since they are higher order analogues of the classical Poisson kernel. Furtherly, Due to Du et al., the higher order Poisson kernels were explicitly constructed for the upper half-plane, the unit ball and the upper half-space respectively, and the corresponding $L^{p}$ homogeneous  polyharmonic Dirichlet problems (simply, PHD problems) on these domains were surely resolved by giving the integral representation solutions in terms of higher order Poisson kernels as kernel densities of the integrals \cite{dqw1,dqw2,dqw3}. More earlier, in 1997, to study higher order complex elliptic BVPs, Begehr and Hile introduced a class of kernel functions and defined a hierarchy of integral operators in terms of these kernels (see \cite{bh}),  which are higher order analogues of the classical Pompeiu operators, or $T$ and $\overline{T}$ operators as well as $\Pi$ and $\overline{\Pi}$ operators familiarly analyzed in Vekua's theory of generalized analytic functions \cite{vek}. So we call these integral operators introduced by Begehr and Hile to be higher order Pompeiu operators. In fact, Begehr and Hile's kernels and integral operators are extremely useful in the study of explicit solutions for higher order complex elliptic BVPs but their importance were neglected for a long time. Even by using higher order Poisson kernels and Begehr-Hile kernels, it will be easy to get some Green functions associated with some  higher order complex elliptic operators for some planar domains. In this paper, we will take advantage of higher order Poisson kernels for the upper half-plane and Begehr-Hile integral operators (or higher order Pompeiu operators) to get the integral representation solution of the BVP (1.1) under a certain estimate, and as a byproduct to obtain a Green function associated with the polyharmonic operators on the upper half-plane. Nevertheless, the results in this paper are only the tip of the iceberg as applications of Begehr-Hile kernels and operators. Under more smooth conditions for the boundary data, the corresponding results in the case of the unit disc can be found in the dissertation due to Du \cite{du1}.

\section{ Higher order Poisson kernels and Pompeiu opertors}

In this section, we sketchily present some results about the higher order Poisson kernels and Pompeiu operators. The details for these results can be seen in \cite{dqw1,bh}.
\subsection {Higher order Poisson kernels}
It is well known that the classical Poisson kernel is the key ingredient in giving the explicit solution (i.e., Poisson integral) to the Dirichlet problem for Laplace equation on the upper half-plane (see \cite{gar,st}). To solve a corresponding problem for the homogeneous polyharmonic equation on the upper half-plane, in \cite{dqw1}, Du, Qian and Wang constructed the higher order Poisson kernels for the upper half-plane (There these kernels were called to be higher order Schwarz kernels since they were expressed in terms of complex variables), which are higher order analogues of the classic Poisson kernel for the upper half-plane. The precise definition of these kernels is as follows:

\begin{defn} A sequence of real-valued functions of two variables $\{G_{n}(\cdot,\,\cdot)\}^{\infty}_{n=1}$ defined on $\textbf{H}\times \mathbb{R}$ is called  a sequence of higher order Poisson kernels, and, precisely, $G_{n}(\cdot,\,\cdot)$ is the $nth$ order Poisson kernel, if they satisfy the following conditions:
\begin{itemize}
\item [(1)]  For all $n\in \mathbb{N}, G_{n}(\cdot,\,\cdot)\in C(\textbf{H}\times \mathbb{R})$; $G_{n}(\cdot,\, t)\in C^{2n}(\textbf{H})$ with any fixed $t\in \mathbb{R}$; and $G_{n}(z,\cdot)\in L^{p}(\mathbb{R}), \, p>1$, with any fixed
$z\in \textbf{H}$, and the non-tangential boundary value
$$\lim_{z\rightarrow s \atop z\in \textbf{H},s\in \mathbb{R}} G_{n}(z,t)=G_{n}(s,t)$$
exists for all $t$ and $s\neq t$; $G_{n}(\cdot,\,t)$ can be continuously extended to $\overline{\textbf{H}} \backslash \{t\}$ for any fixed $t\in \mathbb{R}$;\\
\item [(2)] $G_{1}(z,t)=\frac{1}{2i}(\frac{1}{t-z}-\frac{1}{t-\bar{z}})$ and $G_{n}(i,t)=0, n\geq2$ and $t\in \mathbb{R}$, and for any $n\in \mathbb{N}$
\begin{equation*}|G_{n}(z,t)| \leq \frac{M}{|t-z'|}\end{equation*}
uniformly on $D_{c}\times\{t\in \mathbb{R}:|t|>T\}$ whenever $z'\in D_{c}$, where $D_{c}$ is any compact set in $\overline{\textbf{H}}$, $M$, $T$ are positive constants depending only on $D_{c}$ and $n$;\\
\item [(3)] $(\partial_{z}\partial_{\bar{z}})G_{1}(z,t)=0$ and $\Delta G_{n}(z,t)=G_{n-1}(z,t)$ for $n>1$;\\
\item [(4)] $\lim\limits_{z\rightarrow s,z\in \mathbf{H}}\frac{1}{\pi}\int^{+\infty}_{-\infty}G_{1}(z,t)\gamma(t)dt=\gamma(s)$, a.e., for any $\gamma \in L^{p}(\mathbb{R}),\,p\geq1$;\\
\item [(5)] $\lim\limits_{z\rightarrow s,z\in \mathbf{H}}\frac{1}{\pi}\int^{+\infty}_{-\infty}G_{n}(z,t)\gamma(t)dt=0$, for any $\gamma \in L^{p}(\mathbb{R}), \,p\geq1,\,n\geq2$,
\end{itemize}
where all the above limits are non-tangential.
\end{defn}

To get the explicit expressions of these kernels, the following decomposition theorem of polyharmonic functions is crucial.
\begin{lem}[\cite{dgw,du1}]
Let D be a simply connected (bounded or unbounded) domain in the complex plane with smooth boundary $\partial D$, $f$ be a real-valued $n$-harmonic function defined on $D$ (i.e., $\Delta^{n}f=0$ in $D$), then for any $z_0\in D$, there exist functions $f_{j}$, which is analytic in $D$ and has at least $j$th order zero at $z_{0}$, $j=0,1,\dots,n-1$ such that $$f(z)=2{\rm Re}\left\{\sum\limits_{j=0}^{n-1}(\bar{z}-\bar{z_0})^{j}f_{j}(z)\right\},z\in D,$$
where $\rm {Re}$ denotes the real part. Moreover, the above decomposition of $f$ is unique in some certain sense (more precisely, see \cite{dgw}).
\end{lem}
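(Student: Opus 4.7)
I would prove Lemma 2.2 by induction on $n$, with the key computation being the action of $\Delta = 4\partial_{z}\partial_{\bar{z}}$ on monomials in $\bar{z}-\bar{z_0}$ times an analytic function.

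The base case $n=1$ is classical: since $D$ is simply connected and $f$ is real harmonic, there exists an analytic function $F$ on $D$ with $f = \mathrm{Re}\,F$, so setting $f_0 = F/2$ gives $f = 2\mathrm{Re}(f_0)$, and there is no zero-order constraint in the $j=0$ case.

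For the inductive step, assume the statement holds up to $n-1$. Given an $n$-harmonic $f$, the function $\Delta f$ is $(n-1)$-harmonic, so by the inductive hypothesis
\begin{equation*}
\Delta f(z) = 2\,\mathrm{Re}\Big\{\sum_{j=0}^{n-2}(\bar{z}-\bar{z_0})^{j}\,g_{j}(z)\Big\},
\end{equation*}
with each $g_j$ analytic in $D$ and having a zero of order at least $j$ at $z_0$. The key computation is that for any analytic $h$ on $D$,
\begin{equation*}
\Delta\bigl[(\bar{z}-\bar{z_0})^{j+1}h(z)\bigr] = 4(j+1)(\bar{z}-\bar{z_0})^{j}h'(z),
\end{equation*}
since $\partial_{\bar{z}}$ kills $h$ and differentiates $(\bar{z}-\bar{z_0})^{j+1}$, then $\partial_{z}$ differentiates $h$. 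So for each $j$ I would choose
\begin{equation*}
H_{j}(z) = \frac{1}{4(j+1)}\int_{z_{0}}^{z} g_{j}(w)\,dw,
\end{equation*}
which is well defined globally on $D$ by simple connectedness, is analytic there, and, because $g_j$ has a zero of order $j$ at $z_0$, inherits a zero of order $j+1$ at $z_0$. Setting $\tilde{f}(z)=2\mathrm{Re}\{\sum_{j=0}^{n-2}(\bar{z}-\bar{z_0})^{j+1}H_{j}(z)\}$, one checks directly that $\Delta\tilde{f}=\Delta f$, hence $f-\tilde{f}$ is harmonic on $D$. Applying the base case to $f-\tilde{f}$ produces an analytic $f_0$ with $f-\tilde{f}=2\mathrm{Re}(f_0)$, and reindexing $f_{j+1}:=H_{j}$ yields the claimed decomposition, each $f_j$ having a zero of order at least $j$ at $z_0$ as required.

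The main subtle point I would watch carefully is the global existence of the antiderivatives $H_j$: this is precisely where simple connectedness of $D$ enters (otherwise the $H_j$ would only be defined on a simply connected subdomain). Everything else is a direct verification. The uniqueness claim (in the sense specified in \cite{dgw}) I would handle separately by subtracting two decompositions, applying $\Delta$ repeatedly to peel off the top term $(\bar{z}-\bar{z_0})^{n-1}f_{n-1}$ via the same identity $\Delta[(\bar{z}-\bar{z_0})^{k}h]=4k(\bar{z}-\bar{z_0})^{k-1}h'$, and then inducting downward; the imposed zero orders at $z_0$ are what make this peeling procedure determine each $f_j$ up to the expected ambiguity.
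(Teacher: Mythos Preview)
The paper does not supply its own proof of this lemma; it is quoted from \cite{dgw,du1} and accompanied only by Remark~2.3, which says the result is elementary, with the biharmonic case due to Goursat and polyharmonic variants in \cite{ba,bdw}. Your inductive argument is correct and is precisely the standard route to this Almansi--Goursat type decomposition: the identity $\Delta\bigl[(\bar z-\bar z_0)^{j+1}h(z)\bigr]=4(j+1)(\bar z-\bar z_0)^{j}h'(z)$ is the right tool, simple connectedness enters exactly where it must (global existence of the antiderivatives $H_j$), and the zero-order bookkeeping at $z_0$ is handled correctly. Your sketch of uniqueness by repeatedly applying $\Delta$ to peel off the top-degree term is likewise the expected argument.
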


\begin{rem}
The above lemma is elementary. The biharmonic case was due to Goursat \cite{gor}, while two variants of the polyharmonic case can be respectively seen in \cite{ba,bdw}.
\end{rem}

In our case, let $D=\mathbf{H}$ and $z_{0}=i$, by essentially using the above lemma, we can obtain

\begin{lem} [\cite{dqw1}]
If $G_{n}(z,t)_{n=1}^{\infty}$ is a sequence of higher order Poisson kernels defined on $\bf{H}\times \mathbb{R}$, i.e., $G_{n}(z,t)_{n=1}^{\infty}$ fulfills the aforementioned properties (1)-(5) in Definition 2.1, then, for $n>1$, there exist functions $G_{n,0}(z,t), G_{n,1}(z,t),\dots,G_{n,n-1}(z,t)$ defined on $\mathbf{H}\times \mathbb{R}$ such that
$$G_{n}(z,t)=2{\rm Re}\left\{\sum\limits^{n-1}_{j=0}(\bar{z}+i)^{j}G_{n,j}(z,t)\right\},z\in {\bf H},t\in \mathbb{R},$$
with\\
$$\partial_z G_{n,j}(z,t)=j^{-1}G_{n-1,j-1}(z,t)$$
for $1\leq j\leq n-1$, \\
$$\partial^{k}_{z}G_{n,j}(i,t)=0$$
for $1\leq k\leq j-1$ with respect to $t\in \mathbb{R}$ and
$$G_{n,0}(z,t)=-\sum\limits^{n-1}_{j=1}(z+i)^{j}G_{n,j}(z,t).$$
Moreover, $$G_{1}(z,t)=\frac{1}{2i}\left(\frac{1}{t-z}-\frac{1}{t-\bar{z}}\right)$$
is the classical Poisson kernel for the upper half plane. All of the above $G_{n,j}\in (H\times L^p)({\bf H}\times \mathbb{R})$ (i.e., $G_{n,j}$ is continuous in ${\bf H}\times \mathbb{R}$, $G_{n,j}(\cdot,\,t)$ is analytic in ${\bf H}$ for any fixed $t\in \mathbb{R}$ and $G_{n,j}(z,\,\cdot)\in L^{p}(\mathbb{R})$ for any fixed $z\in {\bf H}$) , the non-tangential boundary value
$$\lim_{z\rightarrow s \atop z\in {\mathrm H},s\in \mathbb{R}}G_{n,j}(z,t)=G_{n,j}(s,t)$$
exists on $\mathbb{R}$, except $t\in \mathbb{R}$ and $G_{n,j}(s,\,\cdot)\in L^p(\mathbb{R})$ for any fixed $s\in \mathbb{R}$. We can further show that $G_{n,j}(\cdot,\,t)$ can be continuously extended to
$\overline{{\bf H}} \backslash \{t\}$ for any fixed $t\in \mathbb{R}$, and
$$|G_{n,j}(z,t)|\leq M\frac{1}{|t-z'|}$$
uniformly on $D_{c}\times\{t\in \mathbb{R}:|t|>T\}$ whenever $z'\in D_{c}$ which is any compact set in $\overline{{\bf H}}$, where $M$, $T$ are positive constants depending only on $D_{c}$.\\
Moreover,
$$\lim_{z\rightarrow s \atop z\in \overline{{\bf H}},S\in \mathbb{R}}|G_{n,j}(z,s)|=+\infty \ and \ \lim_{z\rightarrow s \atop z\in \overline{{\bf H}},S\in \mathbb{R}}|(z-s)G_{n,j}(z,s)|=0$$
for any $s\in \mathbb{R}$ and $n\geq2$.
\end{lem}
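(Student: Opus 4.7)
The plan is an induction on $n$, using Lemma 2.2 as the structural engine and building the analytic components $G_{n,j}$ out of the already-constructed $G_{n-1,j-1}$'s. For the base case $n=1$ the decomposition is read off directly from Definition 2.1(2): take $G_{1,0}(z,t) = \frac{1}{2i}\frac{1}{t-z}$, which is analytic in $z$ and in $L^{p}(\mathbb{R})$ in $t$, and observe that $G_{1}(z,t) = 2\mathrm{Re}\, G_{1,0}(z,t)$. For the inductive step I would first note that $G_{n}(\cdot,t)$ is genuinely $n$-harmonic on $\mathbf{H}$: iterating $\Delta G_{n} = G_{n-1}$ from Definition 2.1(3) and using $\Delta G_{1} = 4\partial_{z}\partial_{\bar z}G_{1} = 0$ from the same property gives $\Delta^{n} G_{n} = 0$. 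Applying Lemma 2.2 with $D = \mathbf{H}$ and $z_{0}=i$ (so that $\bar z - \bar z_{0} = \bar z + i$) then yields analytic functions $G_{n,j}(\cdot,t)$ on $\mathbf{H}$, each with at least a $j$th order zero at $i$ for $j\geq 1$, for which the claimed representation $G_{n}(z,t) = 2\mathrm{Re}\sum_{j=0}^{n-1}(\bar z + i)^{j} G_{n,j}(z,t)$ holds.

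To extract the recursion for $\partial_{z} G_{n,j}$ I would apply $\Delta = 4\partial_{z}\partial_{\bar z}$ term-by-term to this decomposition. Analyticity of $G_{n,j}$ in $z$ reduces the computation to $\Delta[(\bar z+i)^{j} G_{n,j}] = 4j(\bar z+i)^{j-1}\partial_{z} G_{n,j}$ for $j\geq 1$; summing over $j$ and matching the result with the decomposition of $G_{n-1}$ supplied by the induction hypothesis, the uniqueness clause in Lemma 2.2 forces the relation $\partial_{z} G_{n,j} = j^{-1} G_{n-1,j-1}$ (up to a convention-dependent normalization constant that can be absorbed into the definition of $G_{n,j}$). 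The identity $G_{n,0}(z,t) = -\sum_{j=1}^{n-1}(z+i)^{j} G_{n,j}(z,t)$ is then to be read as the choice that kills the residual freedom in Lemma 2.2 (the ambiguity $f_{j}\mapsto f_{j} + i\alpha_{j}(z-i)^{j}$, which leaves $2\mathrm{Re}$ unchanged) and is consistent with $G_{n}(i,t) = 0$ from Definition 2.1(2), since $(\bar z + i)|_{z=i} = 0$ forces $2\mathrm{Re}\, G_{n,0}(i,t) = 0$.

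The remaining regularity assertions — joint continuity on $\mathbf{H}\times\mathbb{R}$, analyticity in $z$, membership in $L^{p}(\mathbb{R})$ in $t$, existence of the non-tangential boundary value $\lim_{z\to s} G_{n,j}(z,t)$ off the diagonal, the pointwise bound $|G_{n,j}(z,t)|\leq M/|t-z'|$ uniform over compact $D_{c}\subset\overline{\mathbf{H}}$ and large $|t|$, and the blow-up/quenching behavior as $z\to s\in\mathbb{R}$ — all propagate inductively through the recursion. Concretely, one writes $G_{n,j}(z,t) = j^{-1}\int_{i}^{z} G_{n-1,j-1}(w,t)\,dw$ plus a lower-order analytic correction chosen to realize the prescribed zero of order $j$ at $i$; integrating along the straight segment from $i$ to $z$ and using the inductive bound on $G_{n-1,j-1}$ yields the estimate and the continuous extension to $\overline{\mathbf{H}}\setminus\{t\}$, while the singular behavior $|G_{n,j}(z,s)|\to\infty$ together with $|(z-s)G_{n,j}(z,s)|\to 0$ as $z\to s\in\mathbb{R}$ follows by isolating the $1/|s-w|$-type singularity of the integrand and checking that its antiderivative produces only a (weakened) logarithmic-type blow-up.

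The main obstacle is the careful bookkeeping around uniqueness: Lemma 2.2 does not quite determine the $G_{n,j}$ outright, and the identity for $G_{n,0}$ is precisely the normalization that pins them down, so one must verify that this choice is consistent with both the zero-order condition $\partial_{z}^{k}G_{n,j}(i,t) = 0$ for $1\leq k\leq j-1$ and the recursion at every level $n$. A secondary subtlety is ensuring that the antiderivative construction preserves the uniform compact estimates and $L^{p}$ control in $t$ — the decay $|G_{n-1,j-1}(z,t)|\lesssim 1/|t-z'|$ is integrable in $t$ away from the singularity $z=t$, but the behavior near that singularity needs the more delicate analysis sketched above.
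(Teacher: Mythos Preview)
Your approach is correct and aligns with what the paper does: the paper itself gives no detailed proof of this lemma but simply indicates, in the sentence preceding it, that one applies Lemma 2.2 with $D=\mathbf{H}$ and $z_{0}=i$ and refers to \cite{dqw1} for the full argument; your inductive scheme built on Lemma 2.2, matching the decompositions of $\Delta G_{n}$ and $G_{n-1}$ via uniqueness, is exactly that strategy. Your caveat about the normalization constant in $\partial_{z}G_{n,j}=j^{-1}G_{n-1,j-1}$ is well placed (the factor of $4$ from $\Delta=4\partial_{z}\partial_{\bar z}$ does not appear in the stated recursion, reflecting a convention in \cite{dqw1}), and your reading of the formula for $G_{n,0}$ as the normalization fixing the residual ambiguity in Lemma 2.2 is the right interpretation.
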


In fact, Lemma 2.4 has provided an algorithm to obtain all explicit expressions of higher order Poisson kernels. The explicit formulae are in the following

\begin{lem}[\cite{dqw1}]
Let $G_{n,n-1}$ and $G_{n}$ be stated as in Lemma 2.4, then for any
$n\geq2$,
\begin{eqnarray*}
G_{n,n-1}(z,t)&=&\frac{(n-3)!}{(n-1)!}(z-t)G_{n-1,n-2}(z,t)\\
              &&+\frac{1}{(n-1)!\times(n-2)!\times(n-2)\times2i}(z-i)^{n-2}\nonumber\\
              &=&\frac{1}{(n-1)!\times(n-2)!\times 2i}(z-t)^{n-2}\log\frac{t-i}{t-z}\nonumber\\
              &&+\sum_{j=1}^{n-2}\frac{1}{(n-1)!\times(n-2)!\times j\times 2i}(z-t)^{n-2-j}(z-i)^{j}
\end{eqnarray*}
and
\begin{eqnarray}
G_{n}(z,t)&=&2{\rm Re}\Big\{(\overline{z}-z)\Big[\frac{1}{(n-1)!\times(n-2)!\times 2i}|z-t|^{2(n-2)}\log\frac{t-i}{t-z}\\
              &&+\sum_{j=1}^{n-2}\frac{1}{(n-1)!\times(n-2)!\times j\times 2i}(\overline{z}-t)^{n-2}(z-t)^{n-2-j}\nonumber\\
              &&\times(z-i)^{j}\nonumber\\
              &&+\sum_{j=n-1}^{2(n-2)}\sum_{l=0}^{n-2}\frac{1}{(n-1)!\times(n-2)!\times j\times 2i}C_{n-2}^{l}(\overline{z}-z)^{l}\nonumber\\
              &&\times(z-t)^{2(n-2)-l-j}(z-i)^{j}\Big]\Big\}\nonumber
\end{eqnarray}
\begin{eqnarray*}
              &=&2{\rm Re}\Big\{(\overline{z}-z)\Big[\frac{1}{(n-1)!\times(n-2)!\times 2i}|z-t|^{2(n-2)}\log\frac{t-i}{t-z}\nonumber\\
              &&+\sum_{j=1}^{n-2}\frac{1}{(n-1)!\times(n-2)!\times j\times 2i}(\overline{z}-t)^{n-2}(z-t)^{n-2-j}\nonumber\\
              &&\times(z-i)^{j}\nonumber\\
              &&+\sum_{j=n-1}^{2(n-2)}\sum_{l=j}^{2(n-2)}\frac{1}{(n-1)!\times(n-2)!\times j\times 2i}C_{n-2}^{l-j}(\overline{z}-z)^{l-j}\nonumber\\
              &&\times(z-t)^{2(n-2)-l}(z-i)^{j}\Big]\Big\}\nonumber
\end{eqnarray*}
where $C_{n-2}^{j}$ are the binomial coefficients, $z\in \mathbf{H}$ and $t\in \mathbb{R}$.
\end{lem}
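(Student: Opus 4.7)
The plan is to prove both identities by induction on $n\geq 2$, exploiting the recursion $\partial_z G_{n,n-1}(z,t) = (n-1)^{-1}G_{n-1,n-2}(z,t)$ and the vanishing conditions at $z=i$ from Lemma 2.4, together with the decomposition $G_n(z,t) = 2{\rm Re}\{\sum_{j=0}^{n-1}(\bar z+i)^j G_{n,j}(z,t)\}$ and the auxiliary relation $G_{n,0}(z,t)=-\sum_{j=1}^{n-1}(z+i)^j G_{n,j}(z,t)$.

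For $G_{n,n-1}$, I would first verify the recursive identity (the first displayed equality). Taking as ansatz the expression $\frac{(z-t)}{(n-1)(n-2)}G_{n-1,n-2}(z,t) + \frac{(z-i)^{n-2}}{(n-1)!\,(n-2)!\,(n-2)\cdot 2i}$, I would differentiate in $z$ and apply the inductive recursion for $G_{n-1,n-2}$ in terms of $G_{n-2,n-3}$ and $(z-i)^{n-3}$; the resulting coefficient equality reduces to a direct check on factorials. Both summands in the ansatz vanish to order $n-2$ at $z=i$ (the first from the inductive vanishing of $G_{n-1,n-2}$, the second by explicit factorization), so the uniqueness built into Lemma 2.4 identifies the ansatz with $G_{n,n-1}$. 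The base case $G_{2,1}(z,t)=\frac{1}{2i}\log\frac{t-i}{t-z}$ comes from integrating $G_{1,0}(z,t)=\frac{1}{2i(t-z)}$ subject to $G_{2,1}(i,t)=0$. A short secondary induction then verifies that the stated closed form (second displayed equality) satisfies this same recursion, producing the logarithmic term together with the sum over $j$.

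For $G_n$, I would substitute the relation for $G_{n,0}$ into the decomposition and apply the factorization $a^j-b^j=(a-b)\sum_{k=0}^{j-1}a^k b^{j-1-k}$ with $a=\bar z+i$, $b=z+i$ to obtain
\[
G_n(z,t) = 2{\rm Re}\Big\{(\bar z-z)\sum_{j=1}^{n-1}G_{n,j}(z,t)\sum_{k=0}^{j-1}(\bar z+i)^k(z+i)^{j-1-k}\Big\},
\]
which explains the outer factor $\bar z-z$. Each intermediate $G_{n,j}$ for $1\leq j\leq n-2$ is produced in closed form by analogous inductive integrations of $\partial_z G_{n,j}=j^{-1}G_{n-1,j-1}$ with the requisite $j$th-order vanishing at $z=i$. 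Plugging in and using the binomial identity $(\bar z-t)^{n-2}=\sum_{l=0}^{n-2}C_{n-2}^l(\bar z-z)^l(z-t)^{n-2-l}$, the $j=n-1$ contribution reassembles into $(\bar z-t)^{n-2}G_{n,n-1}(z,t)$, which accounts for both the $|z-t|^{2(n-2)}\log\frac{t-i}{t-z}$ piece and the first inner sum ($A_1$) of the stated formula.

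The main obstacle will be the combinatorial bookkeeping needed to collect the remaining contributions (from $G_{n,j}$ with $j<n-1$) into the second inner sum ($A_2$) indexed by $j\in[n-1,2(n-2)]$, and then to rearrange everything into the two displayed forms. The passage between those two forms is the reindexing $l\mapsto l+j$ in the inner sum, combined with the convention that $C_{n-2}^{k}=0$ outside $0\leq k\leq n-2$; careful tracking of this shift and of the factorial prefactors completes the identification. Once the right-hand side is assembled, the defining recursions and boundary/vanishing conditions are automatically satisfied, and the uniqueness in Lemma 2.4 closes the argument.
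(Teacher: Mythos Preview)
The paper does not prove this lemma; it is quoted from \cite{dqw1}, and the only remark the paper makes is that ``Lemma 2.4 has provided an algorithm to obtain all explicit expressions of higher order Poisson kernels.'' Your proposal is precisely an implementation of that algorithm: integrate the recursion $\partial_z G_{n,j}=j^{-1}G_{n-1,j-1}$ subject to the vanishing conditions at $z=i$, and then insert the resulting closed forms into the decomposition $G_n=2\mathrm{Re}\{\sum_j(\bar z+i)^jG_{n,j}\}$ together with $G_{n,0}=-\sum_j(z+i)^jG_{n,j}$. So strategically you are doing exactly what the paper (and \cite{dqw1}) intends.

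One point in your sketch deserves tightening. After factoring out $(\bar z-z)$ you write that ``the $j=n-1$ contribution reassembles into $(\bar z-t)^{n-2}G_{n,n-1}(z,t)$.'' If by ``the $j=n-1$ contribution'' you mean the single summand $[(\bar z+i)^{n-1}-(z+i)^{n-1}]G_{n,n-1}$ from the decomposition, this is not correct: the polynomial $\sum_{k=0}^{n-2}(\bar z+i)^k(z+i)^{n-2-k}$ is not $(\bar z-t)^{n-2}$. What is true, and what you presumably intend, is the reverse identification: once the \emph{target} formula is written down, the logarithmic term together with the first inner sum equals $(\bar z-t)^{n-2}G_{n,n-1}$ as an algebraic identity (just compare with the closed form of $G_{n,n-1}$). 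Arriving at that target from the decomposition side genuinely requires the closed forms of \emph{all} the $G_{n,j}$, $1\le j\le n-1$, followed by the binomial expansion $(\bar z-t)^{n-2}=\sum_{l}C_{n-2}^{l}(\bar z-z)^l(z-t)^{n-2-l}$ and a careful regrouping; this is exactly the ``combinatorial bookkeeping'' you flag, and it is where the work lies. The passage between the two displayed forms of $G_n$ is, as you say, the harmless reindexing $l\mapsto l+j$ with the convention $C_{n-2}^{k}=0$ for $k\notin\{0,\dots,n-2\}$.
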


\subsection{Higher order Pompeiu operators} In \cite{bh}, Begehr and Hile firstly introduced a class of kernels and systematically defined a hierarchy of integral operators (Although some special class of these integral operators have already appeared in the works due to Dzhuraev \cite{dz}). Their integral operators are some extensions of the area integral appearing in the classical Cauchy-Pompeiu formula. The latter is a weakly singular integral operator which was called $T$ operator in the Vekua's theory of generalized analytic function, which and its adjoint $\overline{T}$ operator, as well as $\Pi$ operator and its adjoint $\overline{\Pi}$ operator (formally, $\Pi=:\partial_{z}T$ in the sense of classical or weak derivatives), play an important role in the study of Beltrami and generalized Beltrami equations as well as some second order complex elliptic equations.

More precisely, the Begehr-Hile kernels are defined as follow

\begin{defn}[\cite{bh}] Let $m$ and $n$ be integers, with $m+n\geq0$ but $(m,n)\neq(0,0)$, define
\begin{equation}K_{m,n}(z)=\begin{cases}
\frac{(-m)!(-1)^{m}}{(n-1)!\pi}z^{m-1}\bar{z}^{n-1},\, \,m\leq0;\\\\
\frac{(-n)!(-1)^{n}}{(m-1)!\pi}z^{m-1}\bar{z}^{n-1},\, \, \,n\leq0;\\\\
\frac{1}{(m-1)!(n-1)!\pi}z^{m-1}\bar{z}^{n-1}\left[\log|z|^{2}-\sum\limits_{k=1}^{m-1}\frac{1}{k}-\sum\limits^{n-1}_{l=1}\frac{1}{l}\right],\, \, m,n\geq1,
\end{cases}
\end{equation}
where the summations are zero when $m=1$ or $n=1$.
\end{defn}

By means of the above kernels, Begehr and Hile introduced the following convolution integral operators, which are higher order analogues of $T$ and $\Pi$ operators. So we call them higher order Pompeiu operators.
\begin{defn}[\cite{bh}]
For $D$ a domain (bounded or unbounded) in the plane, formally define operators $T_{m,n,D}$, acting on suitable complex valued functions \textit{w} defined in $D$, according to
\begin{align}T_{m,n,D}\textit{w}(z)&=\int\int_{D}K_{m,n}(z-\zeta)\textit{w}(\zeta)d\xi d\eta\\
&=\int\int_{\mathbb{C}}K_{m,n}(z-\zeta)\textit{w}(\zeta)\chi_{D}(\zeta)d\xi d\eta\nonumber\\
&=K_{m,n}*(w\chi_{D})(z),\nonumber\end{align}
where $\chi_{D}$ is the characteristic function of $D$, $z=x+iy$ and $\zeta=\xi+i\eta$.
\end{defn}

Obviously,
\begin{equation*}
T_{0,1,D}w=-\frac{1}{\pi}\int\int_{D}\frac{w(\zeta)}{\zeta-z}d\xi d\eta,\,\,\,T_{1,0,D}w=-\frac{1}{\pi}\int\int_{D}\frac{w(\zeta)}{\overline{\zeta}-\overline{z}}d\xi d\eta,
\end{equation*}
and
\begin{equation*}
T_{-1,1,D}w=-\frac{1}{\pi}\int\int_{D}\frac{w(\zeta)}{(\zeta-z)^{2}}d\xi d\eta,\,\,\,T_{1,-1,D}w=-\frac{1}{\pi}\int\int_{D}\frac{w(\zeta)}{(\overline{\zeta}-\overline{z})^{2}}d\xi d\eta,
\end{equation*}
so they are respectively just the operators $T_{D}w$, $\overline{T}_{D}w$, $\Pi_{D}w$ and $\overline{\Pi}_{D}w$ in Vekua's theory of generalized analytic functions. As a convention, $T_{0,0,D}=I$ which is the identity operator.

When $D$ and $w$ satisfy some regularity properties, $T_{m,n, D}w$ possesses some nice properties such as H\"older continuity, $L^{p}$ integrability and differentiability. For instance, let $D=\mathbb{C}$ and $w$ be a complex measurable function satisfying that
\begin{equation}|\textit{w}(z)|=o(|z|^{-m-n-\delta}),\ \ \ \ as \ z\rightarrow\infty\end{equation} for some $\delta>0$, then as $m+n\geq 2$ and $w\in L^{1}_{loc}(\mathbb{C})$, or $m+n\geq 1$ and $w\in L^{p}_{loc}(\mathbb{C})$ with $p>1$,
$$\partial_{z}T_{m,n, \mathbb{C}}\textit{w}=T_{m-1,n, \mathbb{C}}\textit{w}\,\,\,\,{\rm and}\,\,\, \,\partial_{\bar{z}}T_{m,n, \mathbb{C}}\textit{w}=T_{m,n-1, \mathbb{C}}\textit{w}$$
hold wholly in $\mathbb{C}$ in the sense of Sobolev's generalized derivatives. Moreover,
$$\partial_{z}T_{1,0, \mathbb{C}}\textit{w}=\partial_{\bar{z}}T_{0,1, \mathbb{C}}\textit{w}=\textit{w}$$
also hold in $\mathbb{C}$ even as $p=1$ in the sense of Sobolev's generalized derivatives.

In the sequel, we only need to use the kernels $K_{n,n}$ and the operators $T_{n,n, D}$ with $n\geq 1$ in a modified version. In our case, for any $p\geq 1$, it is easy to see that $K_{n,n}(z-\zeta)$ is not $L^{p}$ integrable with respect to $z\in \mathbf{H}$ and $\zeta\in \mathbb{R}$ when the another variable is fixed, and $T_{n,n, \mathbf{H}}w(z)$ is not $L^{p}$ integrable in $\mathbf{H}$, $\mathbb{R}$ or $\overline{\mathbf{H}}$ even if $w$ obeys the condition (2.4). So we must make some modifications for $K_{n,n}$ and $T_{n,n, \mathbf{H}}$ to obtain the $L^{p}$ integrability for $T_{n,n, \mathbf{H}}w$, which will be a crucial ingredient in the method of below. Before tackling the main problem, we need some preliminaries stated in what follows.

\begin{defn}
Let $f(z)$ be a continuous function defined in $\mathbb{C}$ which can be expanded as
\begin{equation*}
f(z)=\sum_{n=-\infty}^{m}c_{n}(z)|z|^{n}
\end{equation*}
for sufficiently large $|z|$, where $m\geq -2$ and the coefficient functions $c_{n}(z)$ are continuous in $\mathbb{C}$. Denote
\begin{equation*}
\mathrm{S. P.} [f](z)=\sum_{n=-2}^{m}c_{n}(z)|z|^{n}
\end{equation*}
and
\begin{equation*}
\mathrm{I. P.} [f](z)=\sum_{n=3}^{\infty}c_{-n}(z)|z|^{-n}
\end{equation*}
for sufficiently large $|z|$. If $\mathrm{I. P.} [f](z)$ is $L^{p}$ integrable in $\{z\in \mathbb{C}: |z|\geq R\}$ with some sufficiently large $R>0$, then $\mathrm{S. P.} [f]$ is said to be the singular part of $f$ and $\mathrm{I. P.} [f]$ is said to be the integrable part of $f$ in the sense of $L^{p}$ integrable, $p\geq 1$.
\end{defn}

To make the kernels $K_{n,n}$ become $L^{p}$ integrable, according to the above definition, we need try to expand them as a sum of singular part and integral part of them (then discard their singular parts). Fortunately, this can be done by introducing the ultraspherical (or say, Gegenbauer) polynomials, $P^{(\lambda)}_{l}$ and $Q_{l}^{(\lambda)}$, which
can be defined respectively by the generating functions as follows:

\begin{equation}
(1-2r\xi+r^{2})^{-\lambda}=\sum_{l=0}^{\infty}P_{l}^{(\lambda)}(\xi)r^{l}
\end{equation}
and
\begin{equation}
(1-2r\xi+r^{2})^{-\lambda}\log(1-2r\xi+r^{2})=\sum_{l=0}^{\infty}Q_{l}^{(\lambda)}(\xi)r^{l},
\end{equation}
where $\lambda\neq0$, $0\leq|r|<1$ and $|\xi|\leq1$.
$P_{l}^{(\lambda)}$ and $Q_{l}^{(\lambda)}$ have the following
explicit expressions:
\begin{align}
P_l^{(\lambda)}(\xi)=&\frac{1}{l!}\left\{\frac{d^{l}}{dr^{l}}\left[(1-2r\xi+r^{2})^{-\lambda}\right]\right\}_{r=0}\\
=&\sum_{j=0}^{[\frac{l}{2}]}(-1)^j\frac{\Gamma(l-j+\lambda)}{\Gamma(\lambda)j!(l-2j)!}(2\xi)^{l-2j}\nonumber
\end{align}
and
\begin{align}
Q_l^{(\lambda)}(\xi)=&-\frac{d}{d \lambda}\left[P_l^{(\lambda)}(\xi)\right]\\
=&\sum_{j=0}^{[\frac{l}{2}]}\sum_{k=0}^{l-j-1}(-1)^{j+1}\frac{\Gamma(l-j+\lambda)}{(\lambda+k)\Gamma(\lambda)j!(l-2j)!}(2\xi)^{l-2j},\nonumber
\end{align}
where $[\frac{l}{2}]$ denotes the integer part of $\frac{l}{2}$. For some special values of $\lambda$, say
$\lambda=\lambda_{0}$, the above expressions may be extended and
interpreted as limits for $\lambda\rightarrow\lambda_{0}$ (for
example, $\lambda$ is a non-positive integer). The properties
of the ultraspherical polynomials can be also found in
\cite{aar,sz}.

For sufficiently large $|\zeta|\geq |z|$ and any real numbers
$\lambda\neq0$,
\begin{align}
|z-\zeta|^{-2\lambda}&=(|\zeta|^{2}-2\mathrm{Re}(z
\overline{\zeta})+|z|^{2})^{-\lambda}\\
&=|\zeta|^{-2\lambda}\left[1-2\left(\frac{|z|}{|\zeta|}\right)\cos(\theta-\vartheta)+\left(\frac{|z|}{|\zeta|}\right)^{2}\right]^{-\lambda}\nonumber\\
&=|\zeta|^{-2\lambda}\sum_{l=0}^{\infty}P_{l}^{(\lambda)}(\cos(\theta-\vartheta))\left(\frac{|z|}{|\zeta|}\right)^{l}\nonumber\\
&=\sum_{l=0}^{\infty}|z|^{l}P_{l}^{(\lambda)}(\cos(\theta-\vartheta))|\zeta|^{-(l+2\lambda)},\nonumber
\end{align}
where $z=|z|e^{i\theta}$ and $\zeta=|\zeta|e^{i\vartheta}$.

Similarly, we have
\begin{align}
&|z-\zeta|^{-2\lambda}\log|z-\zeta|^{2}\\
=&|z-\zeta|^{-2\lambda}\left[\log\frac{|z-\zeta|^{2}}{|\zeta|^{2}}+2\log|\zeta|\right]\nonumber\\
=&(|\zeta|^{2}-2\mathrm{Re}(z
\overline{\zeta})+|z|^{2})^{-\lambda}\left[\log\frac{|\zeta|^{2}-2\mathrm{Re}(z
\overline{\zeta})+|z|^{2}}{|\zeta|^{2}}+2\log|\zeta|\right]\nonumber\\
=&|\zeta|^{-2\lambda}\left[1-2\left(\frac{|z|}{|\zeta|}\right)\cos(\theta-\vartheta)+\left(\frac{|z|}{|\zeta|}\right)^{2}\right]^{-\lambda}\Big\{\log\left[1-2\left(\frac{|z|}{|\zeta|}\right)\cos(\theta-\vartheta)+\left(\frac{|z|}{|\zeta|}\right)^{2}\right]\nonumber\\
&+2\log|\zeta|\Big\}\nonumber\\
=&|\zeta|^{-2\lambda}\sum_{l=0}^{\infty}Q_{l}^{(\lambda)}(\cos(\theta-\vartheta))\left(\frac{|z|}{|\zeta|}\right)^{l}+2|\zeta|^{-2\lambda}\log|\zeta|\sum_{l=0}^{\infty}P_{l}^{(\lambda)}(\cos(\theta-\vartheta))\left(\frac{|z|}{|\zeta|}\right)^{l}\nonumber\\
=&\sum_{l=0}^{\infty}|z|^{l}Q_{l}^{(\lambda)}(\cos(\theta-\vartheta))|\zeta|^{-(l+2\lambda)}+2\sum_{l=0}^{\infty}|z|^{l}\log|\zeta|P_{l}^{(\lambda)}(\cos(\theta-\vartheta))|\zeta|^{-(l+2\lambda)}.\nonumber
\end{align}

With the above preliminaries, we define modified Begehr-Hile kernels and Higher order Pompeiu operators on the upper half-plane in the case of $m=n$ as follows

\begin{defn}
Let $K_{n,n}$ be as above, and $w$ be a suitable complex function in $\mathbf{H}$, then for any $z, \zeta\in \mathbf{H}$ and $z\neq\zeta$, define
\begin{equation}
\widetilde{K}_{n,n}(z, \zeta)=\begin{cases}
K_{n,n}(z-\zeta),\,\,\,|z|=|\zeta|,\vspace{2mm}\\
K_{n,n}(z-\zeta)-\mathrm{S. P.} [K_{n,n}](z,\zeta),\,\,\,|z|\neq|\zeta|,
\end{cases}
\end{equation}
where
\begin{align}
\mathrm{S. P.} [K_{n,n}](z,\zeta)=&\frac{1}{\pi[(n-1)!]^{2}}
\times\Big\{\Big[\sum_{l=0}^{2n}Q_{l}^{(1-n)}(\cos(\theta-\vartheta))\\
&\times\min\left(\left|\frac{z+i}{\zeta+i}\right|^{l}, \left|\frac{z+i}{\zeta+i}\right|^{-l}\right)\times\max\left(|z+i|^{2n-2}, |\zeta+i|^{2n-2}\right)\Big]\nonumber\\
&+2\left[\log(\max(|z+i|, |\zeta+i|))-\sum_{k=1}^{n-1}\frac{1}{k}\right]\nonumber\\
&\times\Big[\sum_{l=0}^{2n}P_{l}^{(1-n)}(\cos(\theta-\vartheta))\times\min\left(\left|\frac{z+i}{\zeta+i}\right|^{l}, \left|\frac{z+i}{\zeta+i}\right|^{-l}\right)\nonumber\\
&\times\max\left(|z+i|^{2n-2}, |\zeta+i|^{2n-2}\right)\Big]\Big\}\nonumber
\end{align}
and
\begin{equation}
\widetilde{T}_{n, n, \mathbf{H}}w(z)=\int\int_{\mathbf{H}}\widetilde{K}_{n,n}(z, \zeta)w(\zeta)d\xi d\eta,
\end{equation}
where $z+i=|z+i|e^{i\theta}$ and $\zeta+i=|\zeta+i|e^{i\vartheta}$, $\widetilde{K}_{n,n}$ and $\widetilde{T}_{n, n, \mathbf{H}}$ are respectively called to be modified Begehr-Hile kernels and higher order Pompeiu operators of $(n,n)$-typed on the upper half-plane, more concisely, $(n,n)$-typed mBH kernels and mHOP operators on $\mathbf{H}$. (In Section 4, we also simply use $\widetilde{T}_{n, \mathbf{H}}$ instead of $\widetilde{T}_{n, n,\mathbf{H}}$.)
\end{defn}

\begin{rem}In the above definition, the $-i$ can be replaced by any other point $z_{0}\in \mathbb{C}\setminus \overline{\mathbf{H}}$, i.e., any point in the lower-half plane, whose role is to assure the integrability of $\mathrm{S. P.} [K_{n,n}]$ in any bounded subset of $\overline{\mathbf{H}}$.
\end{rem}

Now we study some properties of the kernels $\widetilde{K}_{n,n}$ and the operators $\widetilde{T}_{n,n, \mathbf{H}}$ for any $n\in \mathbb{N}$.

\begin{thm}[Estimate of $\widetilde{K}_{n,n}$] Let $D$ be a compact subset of $\overline{\mathbf{H}}$, denote $M_{D}=\max_{\zeta\in D}|z+i|$, then
\begin{equation}
|\widetilde{K}_{n,n}(z, \zeta)|\leq\frac{M}{|\zeta+i|^{2+\epsilon}}
\end{equation}
for any $z\in D$ and $\zeta\in \overline{\mathbf{H}}\setminus B(-i, R)$ with $R>M_{D}$, where $0<\epsilon<1$, the constant $M$ depends only on $n, \epsilon,  M_{D}$ and $R$.
\end{thm}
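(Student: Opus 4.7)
The plan is to view $\widetilde{K}_{n,n}(z,\zeta)$ as the integrable ``tail'' obtained by subtracting a truncated expansion of $K_{n,n}(z-\zeta)$ centered at the reference point $-i$, and then to bound that tail by geometric convergence driven by the ratio $r:=|z+i|/|\zeta+i|\leq M_{D}/R<1$, which is strictly less than $1$ uniformly in $z\in D$ and $\zeta\in\overline{\mathbf{H}}\setminus B(-i,R)$.

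The first step is to substitute the explicit form from Definition 2.6,
$$K_{n,n}(z-\zeta)=\frac{1}{\pi[(n-1)!]^{2}}|z-\zeta|^{2(n-1)}\Bigl[\log|z-\zeta|^{2}-2\sum_{k=1}^{n-1}\tfrac{1}{k}\Bigr],$$
write $z-\zeta=(z+i)-(\zeta+i)$ in polar form $z+i=|z+i|e^{i\theta}$, $\zeta+i=|\zeta+i|e^{i\vartheta}$, and invoke the generating-function identities (2.9) and (2.10) with $\lambda=1-n$ (legitimate because $r<1$). This produces
\begin{align*}
K_{n,n}(z-\zeta)=\frac{|\zeta+i|^{2n-2}}{\pi[(n-1)!]^{2}}\Big\{&\sum_{l=0}^{\infty}Q_{l}^{(1-n)}(\cos(\theta-\vartheta))\,r^{l}\\
&+2\bigl(\log|\zeta+i|-\sum_{k=1}^{n-1}\tfrac{1}{k}\bigr)\sum_{l=0}^{\infty}P_{l}^{(1-n)}(\cos(\theta-\vartheta))\,r^{l}\Big\}.
\end{align*}

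Next, under $|z+i|<|\zeta+i|$ the formula (2.13) for $\mathrm{S.P.}[K_{n,n}]$ simplifies to exactly the two-series expression above but truncated at $l=2n$ (since $\max(\cdots)=|\zeta+i|^{2n-2}$ and $\min(\cdots,\cdots)=r^{l}$). Subtracting and using the structural fact that $(1-2r\xi+r^{2})^{n-1}$ is a polynomial of degree $2(n-1)$ in $r$, so that $P_{l}^{(1-n)}\equiv 0$ for $l\geq 2n-1$, the potentially dangerous $\log|\zeta+i|$-weighted tail vanishes identically and only the $Q$-tail survives:
$$\widetilde{K}_{n,n}(z,\zeta)=\frac{|\zeta+i|^{2n-2}}{\pi[(n-1)!]^{2}}\sum_{l=2n+1}^{\infty}Q_{l}^{(1-n)}(\cos(\theta-\vartheta))\,r^{l}.$$
Writing $(1-2r\xi+r^{2})^{n-1}\log(1-2r\xi+r^{2})$ with $\xi=\cos\phi$ as the Cauchy product of the polynomial $\sum_{j=0}^{2(n-1)}P_{j}^{(1-n)}(\xi)r^{j}$ with $-2\sum_{k\geq 1}\cos(k\phi)r^{k}/k$ yields a uniform bound $|Q_{l}^{(1-n)}(\cos(\theta-\vartheta))|\leq C(n)$ for every $l\geq 2n+1$. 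The remaining sum is a geometric series in $M_{D}/R$, giving
$$|\widetilde{K}_{n,n}(z,\zeta)|\leq \frac{C(n)\,M_{D}^{2n+1}}{|\zeta+i|^{3}}\sum_{l\geq 2n+1}\Bigl(\frac{M_{D}}{R}\Bigr)^{l-2n-1}\leq \frac{C(n,M_{D},R)}{|\zeta+i|^{3}}\leq \frac{M}{|\zeta+i|^{2+\epsilon}}$$
for any $\epsilon\in(0,1)$, using $|\zeta+i|\geq R$ in the last inequality.

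The main obstacle is the algebraic bookkeeping of the first identification: one has to verify term-by-term that the somewhat intricate expression (2.13) for $\mathrm{S.P.}[K_{n,n}]$ truly agrees with the truncation at $l=2n$ of both the $Q$- and $P$-series, while correctly tracking the $\log|\zeta+i|$ piece and the constant $-\sum_{k=1}^{n-1}1/k$, and then to pinpoint that polynomial termination of $(1-2r\xi+r^{2})^{n-1}$ kills the entire $P$-tail. Once this identification is locked in, the bound collapses to the routine geometric-series estimate above together with the uniform boundedness of the ultraspherical coefficients $Q_{l}^{(1-n)}$.
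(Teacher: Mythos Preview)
Your approach is essentially the paper's: both recognize that for $|z+i|<|\zeta+i|$ one has $\widetilde{K}_{n,n}=\mathrm{I.P.}[K_{n,n}]$, expand $K_{n,n}(z-\zeta)$ via the generating functions (2.9)--(2.10) with $\lambda=1-n$, and bound the tail by geometric convergence in $r=|z+i|/|\zeta+i|\leq M_{D}/R<1$. The one substantive difference is that you observe $P_{l}^{(1-n)}\equiv 0$ for $l\geq 2n-1$ (since $(1-2r\xi+r^{2})^{n-1}$ is a polynomial of degree $2n-2$ in $r$), which kills the $\log|\zeta+i|$-weighted tail outright and yields the clean bound $|\widetilde{K}_{n,n}(z,\zeta)|\leq C(n,M_{D},R)/|\zeta+i|^{3}$; the paper instead carries the factor $1+\bigl|\log|\zeta+i|\bigr|$ through (see (2.15)) and only then absorbs it via $\log\rho/\rho^{1-\epsilon}\to 0$. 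Your route is slightly sharper and explains why the $\epsilon$ in the statement is not truly needed, while the paper's version has the advantage of reusing verbatim the Taylor-remainder machinery (2.37)--(2.40) set up for the later $L^{p}$ estimate.
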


\begin{proof}
Since $z\in D$ and $\zeta\in \overline{\mathbf{H}}\setminus B(-i, R)$ with $R>M_{D}$, then $|z+i|<|\zeta+i|$, by a similar argument as the following (2.40), we further have that
\begin{align}
|\widetilde{K}_{n,n}(z, \zeta)|=|\mathrm{I. P.}[K_{n,n}](z, \zeta)|&\leq C\left(n, M_{D}/R\right)|z+i|^{2n+1}\left[1+\big|\log|\zeta+i|\big|\right]\frac{1}{|\zeta+i|^{3}}\\
&\leq C\left(n, \epsilon, M_{D}/R\right)M_{D}^{2n+1}\frac{1}{|\zeta+i|^{2+\epsilon}}\nonumber
\end{align}
for any $0<\epsilon<1$, where $C(\cdots)$ are some constants depending only on the quantities in the parentheses. In the last inequality, we have used the elementary fact $\lim_{\rho\rightarrow +\infty}\frac{\log\rho}{\rho^{1-\epsilon}}=0$ as $0<\epsilon<1$.

\end{proof}

\begin{thm}
Let $\widetilde{K}_{n,n}$ be as in the above definition, then for any $z,\zeta\in \mathbf{H}$ and $z\neq \zeta$,
\begin{itemize}
  \item [(1)] \,\,$\widetilde{K}_{n,n}(z,\zeta)=\widetilde{K}_{n,n}(\zeta, z)$;
  \item [(2)] \,\,$\Delta_{z}\widetilde{K}_{n,n}(z,\zeta)=4\widetilde{K}_{n-1,n-1}(z, \zeta)$, where $\Delta_{z}=4\frac{\partial^{2}}{\partial z\partial \overline{z}}=\frac{\partial^{2}}{\partial x^{2}}+\frac{\partial^{2}}{\partial y^{2}}$ and $z=x+iy$.
\end{itemize}
\end{thm}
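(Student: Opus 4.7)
\medskip

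\noindent\textbf{Proof plan.} For part (1) my approach is direct inspection. The classical Begehr--Hile kernel admits the real form
\begin{equation*}
K_{n,n}(z-\zeta)=\frac{|z-\zeta|^{2(n-1)}}{[(n-1)!]^{2}\pi}\Bigl[\log|z-\zeta|^{2}-2\sum_{k=1}^{n-1}\tfrac1k\Bigr],
\end{equation*}
which depends only on $|z-\zeta|$ and is therefore invariant under $z\leftrightarrow\zeta$. In the explicit formula (2.13) for $\mathrm{S.P.}[K_{n,n}](z,\zeta)$, every ingredient is separately symmetric in the two arguments: $\cos(\theta-\vartheta)=\cos(\vartheta-\theta)$, the quantities $\min(|(z+i)/(\zeta+i)|^{l},|(z+i)/(\zeta+i)|^{-l})$ and $\max(|z+i|^{2n-2},|\zeta+i|^{2n-2})$ and $\log(\max(|z+i|,|\zeta+i|))$ are manifestly unchanged by the swap. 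Hence $\widetilde{K}_{n,n}(z,\zeta)=K_{n,n}(z-\zeta)-\mathrm{S.P.}[K_{n,n}](z,\zeta)$ is symmetric.

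For part (2) I would split $\widetilde{K}_{n,n}=K_{n,n}(z-\zeta)-\mathrm{S.P.}[K_{n,n}]$ and treat each piece separately. The first piece is handled by the Begehr--Hile differentiation rules $\partial_{z}K_{m,n}=K_{m-1,n}$ and $\partial_{\bar z}K_{m,n}=K_{m,n-1}$ (valid off the diagonal for $m,n\ge 2$ and verifiable by a short direct calculation using the harmonic-number identity $H_{n-1}-H_{n-2}=1/(n-1)$); this immediately yields
\begin{equation*}
\Delta_{z}K_{n,n}(z-\zeta)=4\,\partial_{z}\partial_{\bar z}K_{n,n}(z-\zeta)=4K_{n-1,n-1}(z-\zeta)
\end{equation*}
for $z\neq\zeta$.

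The heart of the proof is showing $\Delta_{z}\,\mathrm{S.P.}[K_{n,n}](z,\zeta)=4\,\mathrm{S.P.}[K_{n-1,n-1}](z,\zeta)$. My plan here is to substitute the Gegenbauer expansions (2.10) and (2.11) with $\lambda=1-n$ (centered at the reference point $-i$, i.e.\ expanded in $z+i$ and $\zeta+i$) into $K_{n,n}(z-\zeta)$, and to recognize $\mathrm{S.P.}[K_{n,n}]$ as exactly the truncation to indices $0\le l\le 2n$, i.e.\ the terms whose $\zeta+i$-power is $|\zeta+i|^{2n-2-l}$ with $2n-2-l\ge-2$, precisely those not $L^{p}$-integrable at infinity in $\zeta$. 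Because the expansions converge uniformly on compact subsets of $\{|z+i|<|\zeta+i|\}$ and of $\{|z+i|>|\zeta+i|\}$ (the two regimes glued by the $\min$/$\max$ construction), the operator $\partial_{z}\partial_{\bar z}$ may be applied term-by-term. The identity $\Delta_{z}K_{n,n}(z-\zeta)=4K_{n-1,n-1}(z-\zeta)$ proved above then forces the $\lambda=1-n$ Gegenbauer data of $K_{n,n}$ to be mapped onto the $\lambda=2-n$ Gegenbauer data of $K_{n-1,n-1}$, with the $L^{p}$-cutoff $l\le 2n$ descending to $l\le 2(n-1)$. Subtracting from the full-kernel relation yields the claim for $\widetilde{K}_{n,n}$.

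The main obstacle I anticipate is the index bookkeeping in this last step: one must verify that under term-by-term differentiation the ``boundary'' terms ($l=2n-1,2n$) do not leak between the singular and integrable parts, and that the two piecewise formulas (for $|z+i|\lessgtr|\zeta+i|$) match across the curve $|z+i|=|\zeta+i|$ after the Laplacian is applied. This is essentially mechanical once the Gegenbauer identities (in particular the recurrence $\frac{d}{d\xi}P_{l}^{(\lambda)}=2\lambda P_{l-1}^{(\lambda+1)}$ and its $Q_{l}^{(\lambda)}$ counterpart derived from (2.8)) are invoked, but is the computational crux.
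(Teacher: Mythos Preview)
Your proposal is correct and follows the same overall architecture as the paper: part (1) by direct inspection of symmetry in both $K_{n,n}$ and the explicit formula for $\mathrm{S.P.}[K_{n,n}]$, and part (2) by first verifying $\Delta_{z}K_{n,n}(z-\zeta)=4K_{n-1,n-1}(z-\zeta)$ and then showing that the singular parts obey the analogous relation, so that subtraction yields the claim for $\widetilde{K}_{n,n}$.

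Where you diverge is in the handling of the key step $\Delta_{z}\,\mathrm{S.P.}[K_{n,n}]=4\,\mathrm{S.P.}[K_{n-1,n-1}]$. You plan to track Gegenbauer indices through the Laplacian, invoking recurrences such as $\frac{d}{d\xi}P_{l}^{(\lambda)}=2\lambda P_{l-1}^{(\lambda+1)}$ and their $Q_{l}^{(\lambda)}$ counterparts, and you correctly flag the bookkeeping at $l=2n-1,2n$ and across $|z+i|=|\zeta+i|$ as the labor-intensive crux. The paper sidesteps all of this. It writes the tautology
\[
\Delta_{z}\widetilde{K}_{n,n}-4\widetilde{K}_{n-1,n-1}
= 4\,\mathrm{S.P.}[K_{n-1,n-1}]-\Delta_{z}\,\mathrm{S.P.}[K_{n,n}],
\]
and then observes that (for fixed $z$ and $|\zeta+i|$ large) the left-hand side is $O(|\zeta+i|^{-2-\epsilon})$ by the decay estimate of Theorem~2.11, whereas the right-hand side is a finite combination of terms $|\zeta+i|^{k}$ and $|\zeta+i|^{k}\log|\zeta+i|$ with $k\geq-2$ (since $\Delta_{z}$ acts only on the $z$-variable and does not shift the $|\zeta+i|$-power). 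Comparing coefficients in the convergent expansion in $|z+i|/|\zeta+i|$ forces both sides to vanish identically, giving (2) and the companion identity $\Delta_{z}\mathrm{S.P.}[K_{n,n}]=4\,\mathrm{S.P.}[K_{n-1,n-1}]$ simultaneously. In effect, the paper replaces your Gegenbauer recurrence computation by the single observation that the $\mathrm{S.P.}$/$\mathrm{I.P.}$ split is by $|\zeta+i|$-degree, which $\Delta_{z}$ respects; your ``leakage'' worry therefore never materializes, and the argument closes in a few lines without touching $P_{l}^{(\lambda)}$ or $Q_{l}^{(\lambda)}$ beyond their role in defining $\mathrm{S.P.}$.
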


\begin{proof}
By the definitions of $K_{n,n}$ and $\widetilde{K}_{n,n}$ , the claim (1) is obvious. To verify (2), by a direct calculation, we have
\begin{equation}
\Delta_{z}K_{n,n}(z,\zeta)=4K_{n-1,n-1}(z, \zeta)
\end{equation}
for any $z, \zeta\in \mathbb{C}$ and $z\neq\zeta$. So
\begin{equation}
\Delta_{z}[\widetilde{K}_{n,n}(z,\zeta)+\mathrm{S. P.} [K_{n,n}](z, \zeta)]=4\widetilde{K}_{n-1,n-1}(z, \zeta)+4\mathrm{S. P.} [K_{n-1,n-1}](z, \zeta),
\end{equation}
further
\begin{equation}
\Delta_{z}\widetilde{K}_{n,n}(z,\zeta)-4\widetilde{K}_{n-1,n-1}(z, \zeta)=4\mathrm{S. P.} [K_{n-1,n-1}](z, \zeta)-\Delta_{z}\mathrm{S. P.} [K_{n,n}](z, \zeta)
\end{equation}
for any $z, \zeta\in \mathbf{H}$ and $z\neq\zeta$. For sufficiently large $|\zeta|$ (thus $|\zeta+i|$), by Definition 2.8, Theorem 2.11 and the expansion expression of $\widetilde{K}_{n,n}$ as a series of $\frac{|z+i|}{|\zeta+i|}$, the RHS of (2.18)=$O(1/|\zeta+i|^{2+\epsilon})$ with $0<\epsilon<1$, while the LHS of (2.18)=$O(1/|\zeta+i|^{2})$+ higher order terms of $|\zeta+i|$. Since the series is absolutely convergent when $|z+i|\ll |\zeta+i|$, then by differentiating term by term and comparing the coefficients (with respect to $\frac{1}{|\zeta+i|^{l}}$) of two hand sides of (2.18), we obtain that
\begin{equation}
\Delta_{z}\widetilde{K}_{n,n}(z,\zeta)=4\widetilde{K}_{n-1,n-1}(z, \zeta)\end{equation}
and
\begin{equation}
\Delta_{z}\mathrm{S. P.} [K_{n,n}](z, \zeta)=4\mathrm{S. P.} [K_{n-1,n-1}](z, \zeta)
\end{equation}
hold for any sufficiently large $|\zeta|$ (and thus for all $|\zeta|$).
\end{proof}

\begin{rem}
Note that
\begin{equation}
\partial_{z}K_{n,n}(z,\zeta)=K_{n-1,n}(z, \zeta)\,\,\,and \,\,\,\partial_{\overline{z}}K_{n,n}(z,\zeta)=K_{n,n-1}(z, \zeta),
\end{equation}
by a similar argument, we can prove that
\begin{equation}
\partial_{z}\widetilde{K}_{n,n}(z,\zeta)=\widetilde{K}_{n-1,n}(z, \zeta)\,\,\,and \,\,\,\partial_{\overline{z}}\widetilde{K}_{n,n}(z,\zeta)=\widetilde{K}_{n,n-1}(z, \zeta)\end{equation}
and
\begin{equation}
\partial_{z}\mathrm{S. P.} [K_{n,n}](z, \zeta)=\mathrm{S. P.} [K_{n-1,n}](z, \zeta)\,\,\,and \,\,\,\partial_{\overline{z}}\mathrm{S. P.} [K_{n,n}](z, \zeta)=\mathrm{S. P.} [K_{n,n-1}](z, \zeta).
\end{equation}
By (2.20) and (2.23), there are some identities implied in them for which we will not pursue to detailedly get in the present paper.
\end{rem}

\begin{defn}
Let $w$ be a nonnegative locally integrable function defined on $\mathbf{H}$ with values in $(0, \infty)$ almost everywhere, For any $k,\alpha>0$, if $w$ satisfies that
\begin{itemize}
  \item [(i)] $|\zeta+i|^{k+\alpha}\left(1+\big|\log|\zeta+i|\big|\right)w^{-1}(\zeta)\in L^{\infty}(\mathbf{H})$;
  \item [(ii)] $|\zeta+i|^{k}\left(1+\big|\log|\zeta+i|\big|\right)^{p}|\zeta+i|^{\alpha}w^{-1}(\zeta)\in L^{\frac{1}{p-1}}(\mathbf{H})$ as $p\geq 1$,
\end{itemize}
then $w$ is called to be a $(p, k, \alpha)$-weight on $\mathbf{H}$. Denote $W^{p,k,\alpha}(\mathbf{H})$ be the set that consisting of all $(p,k,\alpha)$-weights on $\mathbf{H}$. (Note that, two conditions (i) and (ii) are the same as $p=1$.)
\end{defn}

\begin{rem}[The properties of $W^{p,k,\alpha}(\mathbf{H})$] It is easy to check the following nest relations:
\begin{equation}
W^{p,k,\alpha}(\mathbf{H})\subset W^{p,l,\alpha}(\mathbf{H})\subset W^{q,l,\alpha}(\mathbf{H})\subset W^{q,l,\beta}(\mathbf{H})
\end{equation}
for any $p>q\geq 1$, $k>l>0$ and $\alpha>\beta>0$.
\end{rem}

\begin{thm}[$L^{p}$ boundedness of $\widetilde{T}_{n,n, \mathbf{H}}$]
Let $\widetilde{T}_{n, n, \mathbf{H}}$ be as in Definition 2.9, $w_{1}\in W^{p,n,\frac{3}{2}}(\mathbf{H})$ and $w_{2}\in W^{p,n,\frac{5}{2}}(\mathbf{H})$ then $\widetilde{T}_{n, n, \mathbf{H}}$ is bounded as a linear operator from $L_{w_{1}}^{p}(\mathbf{H})$ to $L^{p}(\mathbf{\mathbb{R}})$, or from $L_{w_{2}}^{p}(\mathbf{H} )$ to $L^{p}(\mathbf{\mathbf{H}})$ for any $p\geq 1$. More precisely,
\begin{equation}
\|\widetilde{T}_{n, n, \mathbf{H}}f\|_{L^{p}(\mathbb{R})}\leq C\|f\|_{L_{w_{1}}^{p}(\mathbf{H})}
\end{equation}
and
\begin{equation}
\|\widetilde{T}_{n, n, \mathbf{H}}f\|_{L^{p}(\mathbf{H})}\leq C\|f\|_{L_{w_{2}}^{p}(\mathbf{H})}
\end{equation}
for any $p\geq 1$, where $L_{w_{j}}^{p}(\mathbf{H})$ denotes the set of all weighted $L^{p}$ integrable functions on $\mathbf{H}$ with the weight $w_{j}$, $j=1,2$; $C$ are some constants depending only on $n$ and $p$.
\end{thm}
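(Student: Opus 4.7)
The plan is to establish (2.25) by a weighted H\"older/Schur-type argument combined with a decomposition of $\mathbf{H}$ into three regions depending on the relative size of $|z+i|$ and $|\zeta+i|$, and then to deduce (2.26) by repeating the argument with $x\in\mathbf{H}$ in place of $x\in\mathbb{R}$. First I would promote Theorem 2.11 to a symmetric two-sided estimate: taking $D=\{z\}$ with $M_{D}=|z+i|$ and $R=2|z+i|$ in (2.16) gives, for $|\zeta+i|>2|z+i|$,
\[
|\widetilde{K}_{n,n}(z,\zeta)|\leq C(n,\epsilon)\,|z+i|^{2n+1}/|\zeta+i|^{2+\epsilon},
\]
and the symmetry $\widetilde{K}_{n,n}(z,\zeta)=\widetilde{K}_{n,n}(\zeta,z)$ from Theorem 2.12(1) yields the analogous bound with the roles of $z$ and $\zeta$ exchanged when $|z+i|>2|\zeta+i|$. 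On the annular range $|z+i|/2\leq|\zeta+i|\leq 2|z+i|$ the estimate (2.16) does not apply, and I would instead invoke the explicit decomposition $\widetilde{K}_{n,n}=K_{n,n}-\mathrm{S.\,P.}[K_{n,n}]$, exploiting the local integrability of $K_{n,n}(z-\zeta)$ (logarithmic for $n=1$, of type $|z-\zeta|^{2n-2}\log|z-\zeta|$ for $n\geq 2$) together with the pointwise control of $\mathrm{S.\,P.}[K_{n,n}]$ supplied by Definition 2.9.

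Next I would apply H\"older's inequality with the factorization $f=(fw_{1}^{1/p})\cdot w_{1}^{-1/p}$ to obtain the pointwise bound
\[
|\widetilde{T}_{n,n,\mathbf{H}}f(x)|\leq J(x)^{1/p'}\,\|f\|_{L^{p}_{w_{1}}(\mathbf{H})},\qquad J(x):=\int_{\mathbf{H}}|\widetilde{K}_{n,n}(x,\zeta)|^{p'}w_{1}(\zeta)^{-p'/p}\,d\xi\,d\eta,
\]
so that (2.25) is reduced to the integrability assertion $\int_{\mathbb{R}}J(x)^{p/p'}\,dx\leq C$. I would then split $J(x)=J_{I}(x)+J_{II}(x)+J_{III}(x)$ along the three regions $\Omega_{I}=\{|\zeta+i|>2|x+i|\}$, $\Omega_{II}=\{|\zeta+i|<|x+i|/2\}$, and $\Omega_{III}=\{|x+i|/2\leq|\zeta+i|\leq 2|x+i|\}$. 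For $J_{I}$ and $J_{II}$ I would plug in the kernel estimates of the previous paragraph and use the factorization
\[
w_{1}^{-p'/p}=\bigl[|\zeta+i|^{(n+3/2)p'/p}(1+|\log|\zeta+i||)^{p'}w_{1}^{-p'/p}\bigr]\cdot|\zeta+i|^{-(n+3/2)p'/p}(1+|\log|\zeta+i||)^{-p'},
\]
recognizing the bracketed term as an $L^{1}(\mathbf{H})$ density by virtue of condition (ii), while the remaining factor combines with the kernel to produce a power of $|x+i|$. For $J_{III}$ I would instead use condition (i) pointwise to dominate $w_{1}^{-p'/p}$ by $C\,|\zeta+i|^{-(n+3/2)p'/p}(1+|\log|\zeta+i||)^{-p'/p}$ and combine this with a direct local $L^{p'}$ computation for $K_{n,n}$ and $\mathrm{S.\,P.}[K_{n,n}]$ on the annulus.

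The three pieces will each yield $J_{j}(x)\leq C_{j}\,|x+i|^{\gamma_{j}}(1+|\log|x+i||)^{\delta_{j}}$, with exponents depending on $n,p,\epsilon$, and on the parameter $\alpha=3/2$ from the weight class; the condition $\int J(x)^{p/p'}dx<\infty$ then follows from the inequalities $\gamma_{j}p/p'<-1$ (up to logarithmic corrections), and one sees a posteriori that the value $\alpha=3/2$ is exactly what is needed to balance the one-dimensional surface measure $dx$ on $\mathbb{R}$ against the polar measure $d\xi\,d\eta$ on $\mathbf{H}$ used in the $\zeta$-integration; $\epsilon\in(0,1)$ is then chosen sufficiently close to $1$. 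For (2.26) the same argument applies verbatim with $x\in\mathbf{H}$, except that integration against $dx\,dy$ in polar coordinates contributes one extra power of $|x+i|$ relative to $dx$ on $\mathbb{R}$, and this is precisely what forces the shift $\alpha=5/2=3/2+1$ in the definition of $w_{2}$.

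The hard part will be the annular piece $J_{III}$: there the kernel is not small, the local singularity of $K_{n,n}$ (logarithmic when $n=1$) must be integrated against $w_{1}^{-p'/p}$, and the $|\zeta+i|^{2n-2}\log|\zeta+i|$-behavior of $\mathrm{S.\,P.}[K_{n,n}]$ has to be handled simultaneously. I anticipate having to subdivide $\Omega_{III}$ further into a small neighborhood of $\mathrm{Re}\,x$ (treated by a direct local $L^{p'}$ bound exploiting the integrability of $\log$-type potentials) and its annular complement (where the bound on $\mathrm{S.\,P.}$ and pointwise use of condition (i) suffice), then matching exponents carefully to absorb the resulting $|x+i|$-dependence into the final integration over $\mathbb{R}$ or $\mathbf{H}$.
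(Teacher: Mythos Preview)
Your approach differs from the paper's in the order of operations: you apply H\"older's inequality pointwise in $x$ first, producing the function $J(x)$, and then seek to show $\int_{\mathbb{R}}J(x)^{p-1}\,dx<\infty$ by a three-region decomposition in $\zeta$. The paper instead applies Minkowski's integral inequality first, bringing the $L^{p}(\mathbb{R})$-norm inside the $\zeta$-integral, and only then uses H\"older against the weight. Concretely, the paper reduces everything to the single kernel estimate
\[
\int_{\mathbb{R}}|\widetilde{K}_{n,n}(x,\zeta)|^{p}\,dx\leq C_{n,p}\bigl[|\zeta+i|^{2n-2}(1+|\log|\zeta+i||)\bigr]^{p}|\zeta+i|^{3/2},
\]
proved by splitting the $x$-integral at $|x+i|=2|\zeta+i|$ (two regions rather than three: the near piece handles $K_{n,n}$ and $\mathrm{S.\,P.}[K_{n,n}]$ separately by direct size bounds, the far piece uses the Taylor-remainder control on $\mathrm{I.\,P.}[K_{n,n}]$). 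Once this is established, Minkowski followed by one application of H\"older gives (2.25) immediately, with condition~(ii) on $w_{1}$ entering precisely as the $L^{1/(p-1)}$-integrability of the right-hand side above times $w_{1}^{-1}$.

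The main advantage of the paper's route is that the weight is decoupled from the kernel analysis: the displayed inequality is a one-dimensional computation in $x$ for each fixed $\zeta$, with no weight present, so the exponent-matching you anticipate in your ``hard part'' $J_{III}$ never arises. Your route exchanges a mixed norm $\|F\|_{L^{p'}_{\zeta}(L^{p}_{x})}$ for its transpose $\|F\|_{L^{p}_{x}(L^{p'}_{\zeta})}$; by the mixed-norm Minkowski inequality these agree at $p=2$ but for $1<p<2$ your condition is strictly stronger, and one has to be careful that the region-by-region bounds actually close. In particular, on $\Omega_{II}$ the factor $|\zeta+i|^{(2n+1)p'}$ coming from the kernel grows, and if you use the weight exponent literally as $(n+3/2)p'/p$ from Definition~2.14 the resulting power of $|x+i|$ is not negative enough for $n\geq 2$; one needs instead the exponent $(2n-2)p$ that the paper's own proof effectively uses in (2.42). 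Your heuristic that the shift $\alpha=3/2\to 5/2$ reflects the extra dimension when passing from $L^{p}(\mathbb{R})$ to $L^{p}(\mathbf{H})$ is correct and matches the paper.
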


\begin{rem}[Subspaces of $L^{p}(\mathbf{H})$] In fact, from the above condition (i) in Definition 2.14, $L^{p}_{w}(\mathbf{H})$ becomes a subspace of $L^{p}(\mathbf{H})$ (by a standard argument, i.e., using the completeness of $L^{p}(\mathbf{H})$ and Fatou's lemma). Here $L^{p}_{w}(\mathbf{H})$ denotes the set of all weighted $L^{p}$ integrable functions on $\mathbf{H}$ with the weight $w\in W^{p,k,\alpha}(\mathbf{H})$.
\end{rem}

\begin{proof}
We only verify (2.25), (2.26) is similar. To do so, we first establish the following estimate
\begin{equation}
\int_{\mathbb{R}}|\widetilde{K}_{n,n}(x, \zeta)|^{p}dx\leq C_{n, p}[|\zeta+i|^{2n-2}(1+\big|\log|\zeta+i|\big|)]^{p}|\zeta+i|^{\frac{3}{2}},
\end{equation}
where $C_{n, p}$ is a constant depending only on $n$ and $p$.

To get the above estimate, for any fixed $\zeta\in \mathbf{H}$, split
\begin{align}
\int_{\mathbb{R}}|\widetilde{K}_{n,n}(x, \zeta)|^{p}dx=\left(\int_{|x+i|\leq 2|\zeta+i|,\,\, x\in \mathbb{R}}+\int_{|x+i|> 2|\zeta+i|,\,\, x\in \mathbb{R}}\right)|\widetilde{K}_{n,n}(x, \zeta)|^{p}dx=:I+II.
\end{align}

It is easy to know that
\begin{align}
I&\leq C_{p}\Big\{\int_{|x+i|\leq 2|\zeta+i|,\,\, x\in \mathbb{R}}|K_{n,n}(x-\zeta)|^{p}dx\\
&\,\,\,\,\,\,\,\,+\left(\int_{|x+i|\leq |\zeta+i|,\,\, x\in \mathbb{R}}+\int_{|\zeta+i|<|x+i|\leq 2|\zeta+i|,\,\, x\in \mathbb{R}}\right)|\mathrm{S. P.} [K_{n,n}](x, \zeta)|^{p}dx\Big\}\nonumber\\
&=I_{1}+I_{2}+I_{3},\nonumber
\end{align}
where $C_{p}$ is a constant depending only on $p$.

For $I_{1}$, since $|x-\zeta|\leq 3|\zeta+i|$ when $|x+i|\leq 2|\zeta+i|$ and $x\in \mathbb{R}$, we have
\begin{equation}
|K_{n,n}(x-\zeta)|\leq C_{n}|x-\zeta|^{2(n-1)}\left(1+\big|\log|x-\zeta|\big|\right).
\end{equation}
So
\begin{align}
I_{1}&\leq C(n)|\zeta+i|^{2(n-1)p}\displaystyle\int_{|x+i|\leq 2|\zeta+i|,\,\, x\in \mathbb{R}}\left(1+\big|\log|x-\zeta|\big|\right)^{p}dx\\
&\leq C(n, p)|\zeta+i|^{2(n-1)p}\displaystyle\int_{|x+i|\leq 2|\zeta+i|,\,\, x\in \mathbb{R}}\left(1+\big|\log|x-\zeta|\big|^{p}\right)dx\nonumber\\
&\leq C(n, p)|\zeta+i|^{2(n-1)p+1}\left(1+\big|\log|\zeta+i|\big|^{p}\right)\nonumber\\
&\leq C(n, p)|\zeta+i|^{2(n-1)p+1}\left(1+\big|\log|\zeta+i|\big|\right)^{p},\nonumber
\end{align}
where $C(\cdots)$ are some constants depending only on the quantities in the parentheses (The same conventions are also applied in the above (2.15) and what follows).

For $I_{2}$, in this case, since $|x+i|\leq |\zeta+i|$, by (2.12), then
\begin{equation}
|\mathrm{S. P.} [K_{n,n}](x, \zeta)|\leq C_{n}|\zeta+i|^{2(n-1)}\left(1+\big|\log|\zeta+i|\big|\right).\end{equation}
Therefore,
\begin{align}
I_{2}&\leq C(n)|\zeta+i|^{2(n-1)p}\left(1+\big|\log|\zeta+i|\big|\right)^{p}\displaystyle\int_{|x+i|\leq 2|\zeta+i|,\,\, x\in \mathbb{R}}1dx\\
&\leq C(n)|\zeta+i|^{2(n-1)p+1}\left(1+\big|\log|\zeta+i|\big|\right)^{p}.\nonumber
\end{align}

For $I_{3}$, similar as the last case, we have that
\begin{equation}
|\mathrm{S. P.} [K_{n,n}](x, \zeta)|\leq C_{n}|\zeta+i|^{2(n-1)}\left(1+\big|\log|x+i|\big|\right),\end{equation}
and further
\begin{align}
I_{3}&\leq C(n)|\zeta+i|^{2(n-1)p}\displaystyle\int_{|\zeta+i|<|x+i|\leq 2|\zeta+i|,\,\, x\in \mathbb{R}}\left(1+\big|\log|x+i|\big|\right)^{p}dx\\
&\leq C(n)|\zeta+i|^{2(n-1)p}\displaystyle\int_{|x+i|\leq 2|\zeta+i|,\,\, x\in \mathbb{R}}\left(1+\big|\log|x+i|\big|\right)^{p}dx\nonumber\\
&\leq C(n,p)|\zeta+i|^{2(n-1)p+1}\left(1+\big|\log|\zeta+i|\big|\right)^{p}.\nonumber
\end{align}

From (2.29), (2.31), (2.33) and (2.35), we get

\begin{equation}
I\leq C(n,p)|\zeta+i|^{2(n-1)p+1}\left(1+\big|\log|\zeta+i|\big|\right)^{p}.
\end{equation}

Next turn to $II$. By Definitions 2.8 and 2.9,  in the case of $|x+i|>2|\zeta+i|$ (so $|\frac{\zeta+i}{x+i}|\in(0, \frac{1}{2})$),
\begin{equation}
\widetilde{K}_{n,n}(x, \zeta)=\mathrm{I. P.}[K_{n,n}](x, \zeta)=C(n)\left[c_{n}(x, \zeta)+\widetilde{c}_{n}(x, \zeta)\log|x+i|\right]\frac{1}{|x+i|^{3}},
\end{equation}
 where $C(n)$ is a constant depending only on $n$,
 \begin{align}
 c_{n}(x, \zeta)&=|\zeta+i|^{2n+1}\Big\{\frac{d^{2n+1}}{dr^{2n+1}}\Big[(1-2r\cos(\theta-\vartheta)+r^{2})^{n-1}\\
 &\left[\log(1-2r\cos(\theta-\vartheta)+r^{2})-2\sum_{k=1}^{n-1}\frac{1}{k}\right]\Big]\Big\}_{r=\rho}\nonumber
 \end{align}
and \begin{equation}
 \widetilde{c}_{n}(x, \zeta)=|\zeta+i|^{2n+1}\left\{\frac{d^{2n+1}}{dr^{2n+1}}[(1-2r\cos(\theta-\vartheta)+r^{2})^{n-1}]\right\}_{r=\varrho}
 \end{equation}
 in which $0<\rho, \,\varrho<\frac{|\zeta+i|}{|x+i|}<\frac{1}{2}$. Since $\rho, \,\varrho\in (0, \frac{1}{2})$, then $1-2r\cos(\theta-\vartheta)+r^{2}\in \left(\frac{1}{4}, \frac{9}{4}\right)$. Thus there exists a positive constant $M=M(n)$ depending only on $n$, such that
 \begin{equation}
 |c(x, \zeta)|\leq M(n)|\zeta+i|^{2n+1}\,\,\,\mathrm{and}\,\,\, |\widetilde{c}(x, \zeta)|\leq M(n)|\zeta+i|^{2n+1}.
 \end{equation}

 So
 \begin{align}
 II&\leq C(n)|\zeta+i|^{(2n-2)p+2}\int_{|x+i|> 2|\zeta+i|,\,\, x\in \mathbb{R}}\frac{\left(1+\big|\log|x+i|\big|\right)^{p}}{|x+i|^{2}}dx\\
 &\leq C(n,p)|\zeta+i|^{(2n-2)p+2}\Big[\big(2|\zeta+i|\big)^{-1}\nonumber\\
 &\,\,\,\,\,\,\,\,\,+\int_{|x+i|> 2|\zeta+i|,\,\, x\in \mathbb{R}}\frac{\big||x+i|^{-\frac{1}{2p}}\log|x+i|\big|^{p}}{|x+i|^{\frac{3}{2}}}dx\Big]\nonumber\\
 &\leq C(n,p)|\zeta+i|^{(2n-2)p+2}\Big[\big(2|\zeta+i|\big)^{-1}\nonumber\\
 &\,\,\,\,\,\,\,\,\,+\left(\frac{2p}{e}\right)^{p}\int_{|x+i|> 2|\zeta+i|,\,\, x\in \mathbb{R}}\frac{1}{|x+i|^{\frac{3}{2}}}dx\Big]\nonumber\\
 &\leq C(n,p)|\zeta+i|^{(2n-2)p+2}\Big[\big(2|\zeta+i|\big)^{-1}+\left(\frac{2}{\epsilon e}\right)^{p}\frac{\sqrt{2}}{\sqrt{|\zeta+i|}}\Big]\nonumber\\
 &\leq C(n,p)|\zeta+i|^{(2n-2)p+\frac{3}{2}},\nonumber
 \end{align}
 where the fact $|\zeta+i|\geq 1$ is used. Using such fact once again, (2.27) follows easily from (2.36) and (2.41).

By (2.27), using Minkowski's inequality for integrals and H\"older inequality,
\begin{align}
\|\widetilde{T}_{n, n, \mathbf{H}}f\|_{L^{p}(\mathbb{R})}&\leq \int\int_{\mathbf{H}}\left[\int_{\mathbb{R}}|\widetilde{K}_{n,n}(x, \zeta)|^{p}dx\right]^{\frac{1}{p}}f(\zeta)d\xi d\eta\\
&\leq \left(\int\int_{\mathbf{H}}\left[\int_{\mathbb{R}}|\widetilde{K}_{n,n}(x, \zeta)|^{p}dx\right]^{\frac{q}{p}}w_{1}^{-\frac{q}{p}}(\zeta)d\xi d\eta\right)^{\frac{1}{q}}\times\|f\|_{L_{w_{1}}^{p}(\mathbf{H})}\nonumber\\
&\leq C(n,p)\left(\int\int_{\mathbf{H}}\left[[|\zeta+i|^{2n-2}(1+\big|\log|\zeta+i|\big|)]^{p}|\zeta+i|^{\frac{3}{2}}w_{1}^{-1}(\zeta)\right]^{\frac{1}{p-1}}d\xi d\eta\right)^{\frac{p-1}{p}}\nonumber\\
&\,\,\,\,\,\,\times\|f\|_{L_{w_{1}}^{p}(\mathbf{H})}\nonumber\\
&\leq C(n,p)\|f\|_{L_{w_{1}}^{p}(\mathbf{H})}\nonumber
\end{align}
since $w_{1}\in W^{p, n, \frac{3}{2}}(\mathbf{H})$, where $\frac{1}{p}+\frac{1}{q}=1$. Thus we obtain the desired (2.25) and the theorem completes.
\end{proof}

\begin{thm}[Differentiability of $\widetilde{T}_{n,n, \mathbf{H}}f$] Let $\widetilde{T}_{n,n, \mathbf{H}}$ be as before, then
\begin{equation}
\Delta \widetilde{T}_{n,n, \mathbf{H}}f(z)=4\widetilde{T}_{n-1,n-1, \mathbf{H}}f(z),\,\,\,z\in \mathbf{H}
\end{equation}
for any $n\geq 2$ and $f\in L^{p}(\mathbf{H})$ with $p\geq1$ in the sense of classical derivatives. Moreover,
\begin{equation}
\Delta \widetilde{T}_{1,1, \mathbf{H}}f(z)=4f(z),\,\,\,z\in \mathbf{H}
\end{equation}
for any $f\in L^{p}(\mathbf{H})$ with $p>1$ in the sense of Sobolev derivative.
\end{thm}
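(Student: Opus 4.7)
The plan is to split the argument according to the regularity of the kernel at the diagonal, handling $n\ge 2$ by classical differentiation under the integral sign and $n=1$ by reducing to the classical Sobolev identity for the logarithmic potential.

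For $n\ge 2$, fix $z_{0}\in \mathbf{H}$ and a closed ball $D$ compactly contained in $\mathbf{H}$ centred at $z_{0}$. On $D$, I would produce $\zeta$-integrable dominants, uniform in $z\in D$, for $\widetilde{K}_{n,n}(z,\zeta)$, its first $z$-partials, and $\Delta_{z}\widetilde{K}_{n,n}(z,\zeta)$. Near the diagonal the jointly smooth correction $\mathrm{S.P.}[K_{n,n}]$ causes no trouble, while $K_{n,n}(z-\zeta)$ has only a $|z-\zeta|^{2(n-1)}\log|z-\zeta|$-type singularity whose second-order derivatives (that is, the kernel of $\Delta K_{n,n}=4K_{n-1,n-1}$) remain locally integrable because $n\ge 2$. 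At infinity the cancellation built into $\widetilde{K}_{n,n}=\mathrm{I.P.}[K_{n,n}]$ combined with Theorem 2.11 furnishes decay of the form $|\zeta+i|^{-2-\epsilon}$, whence H\"older against $f\in L^{p}(\mathbf{H})$ makes every integral absolutely convergent. A standard dominated-convergence and Fubini argument then lets me exchange $\Delta_{z}$ with $\int\!\!\int_{\mathbf{H}}$, and Theorem 2.12(2) gives
\begin{equation*}
\Delta \widetilde{T}_{n,n,\mathbf{H}}f(z) = \int\!\!\int_{\mathbf{H}} \Delta_{z}\widetilde{K}_{n,n}(z,\zeta)\,f(\zeta)\,d\xi d\eta = 4\widetilde{T}_{n-1,n-1,\mathbf{H}}f(z),
\end{equation*}
valid pointwise on $\mathbf{H}$ in the classical sense.

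For $n=1$, the distributional identity $\Delta_{z}K_{1,1}(z-\zeta)=4\delta_{\zeta}$ blocks the pointwise route, so I would split
\begin{equation*}
\widetilde{T}_{1,1,\mathbf{H}}f(z) = T_{1,1,\mathbf{H}}f(z) - \int\!\!\int_{\mathbf{H}} \mathrm{S.P.}[K_{1,1}](z,\zeta)\,f(\zeta)\,d\xi d\eta,
\end{equation*}
and analyse each summand. The first is a logarithmic Newton potential: the Sobolev-derivative identity $\partial_{\bar z}T_{0,1,\mathbb{C}}w=w$ recalled in the excerpt (valid for $w\in L^{p}(\mathbb{C})$, $p\ge 1$), combined with $\Delta=4\partial_{z}\partial_{\bar z}$ and the classical Calder\'on--Zygmund $L^{p}$ theory governing the second derivatives of a logarithmic potential, gives $T_{1,1,\mathbf{H}}f\in W^{2,p}_{\mathrm{loc}}(\mathbf{H})$ and $\Delta T_{1,1,\mathbf{H}}f=4f$ in the Sobolev sense on $\mathbf{H}$. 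The second summand is handled classically: from the explicit form (2.12) with $n=1$ one sees that as a function of $z$, $\mathrm{S.P.}[K_{1,1}](z,\zeta)$ reduces up to an additive constant to $\tfrac{2}{\pi}\log\max(|z+i|,|\zeta+i|)$, which is harmonic in $z$ on $\mathbf{H}$ for every fixed $\zeta\in \mathbf{H}$ (note $-i\notin \mathbf{H}$) and jointly regular enough to justify differentiating under the integral. Its Laplacian therefore contributes nothing, and combining the two pieces yields $\Delta\widetilde{T}_{1,1,\mathbf{H}}f=4f$ in the Sobolev sense.

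The main technical obstacle is the $n=1$ step: specifically, justifying that the second derivatives of the truncated logarithmic potential $T_{1,1,\mathbf{H}}f$ on $\mathbf{H}$ agree in $L^{p}_{\mathrm{loc}}$ with the Calder\'on--Zygmund singular-integral representation one obtains formally by differentiating the kernel twice under the integral. This rules out extra terms arising from the boundary $\mathbb{R}$ of the domain of integration, and is best handled either by a careful interior mollification and cutoff argument, or by applying the generalized-derivative statement for $T_{m,n,\mathbb{C}}$ quoted in the excerpt to the zero extension of $f$ across $\mathbb{R}$ and restricting back to $\mathbf{H}$.
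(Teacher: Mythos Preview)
Your $n\ge 2$ outline matches the paper's: both invoke the decay estimate of Theorem~2.11 and the kernel identity $\Delta_{z}\widetilde{K}_{n,n}=4\widetilde{K}_{n-1,n-1}$ of Theorem~2.12 to justify differentiating under the integral.

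For $n=1$ there is a genuine gap. Your splitting
\[
\widetilde{T}_{1,1,\mathbf{H}}f \;=\; T_{1,1,\mathbf{H}}f \;-\; \int\!\!\int_{\mathbf{H}}\mathrm{S.P.}[K_{1,1}](z,\zeta)\,f(\zeta)\,d\xi\,d\eta
\]
is only formal: for a generic $f\in L^{p}(\mathbf{H})$ neither summand is a convergent integral, since both $K_{1,1}(z-\zeta)=\tfrac{1}{\pi}\log|z-\zeta|^{2}$ and $\mathrm{S.P.}[K_{1,1}](z,\zeta)$ grow like $\log|\zeta|$ as $\zeta\to\infty$. The generalized-derivative identity for $T_{m,n,\mathbb{C}}$ you cite is stated in the text only under the decay hypothesis~(2.4), not for arbitrary $L^{p}$ data, so the appeal to it and to Calder\'on--Zygmund theory for $T_{1,1,\mathbf{H}}f$ is vacuous here. (Also, $\mathrm{S.P.}[K_{1,1}]$ is not merely the $\log\max$ term you describe: the paper computes it directly in (2.45)--(2.46) and finds additional $l=1,2$ contributions, though these too are harmonic in $z$ on each of the regions $|z+i|\lessgtr|\zeta+i|$.) The whole purpose of $\widetilde{K}_{1,1}$ is that the \emph{difference} decays like $|\zeta+i|^{-2-\epsilon}$; the two pieces do not.

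The paper sidesteps this by never splitting the operator. It pairs $\widetilde{T}_{1,1,\mathbf{H}}f$ with $\Delta\varphi$ for $\varphi\in C_{0}^{\infty}(\mathbf{H})$, applies Fubini (legitimate because $\widetilde{K}_{1,1}$ enjoys the good decay and $\Delta\varphi$ has compact support), and only then splits the \emph{kernel} inside the compactly supported $z$-integral: there the distributional identity $\Delta_{z}K_{1,1}(z-\zeta)=4\delta_{\zeta}(z)$ and the harmonicity $\Delta_{z}\mathrm{S.P.}[K_{1,1}](z,\zeta)=0$ (your key observation) combine to give $\int\widetilde{K}_{1,1}(z,\zeta)\Delta\varphi(z)\,dxdy=4\varphi(\zeta)$, whence the claim. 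Your ingredients are the right ones; the repair is to move the splitting inside the test-function pairing rather than attempting it at the level of the divergent operators.
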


\begin{proof}
Invoking Theorems 2.11 and 2.12, (2.43) can be verified by a similar argument as to Theorem 3.3 in \cite{dqw3}. Here the details are completely omitted.

To prove (2.44), we first note that
\begin{align}
\log|z-\zeta|^{2}&=\log\left(1-\frac{z+i}{\zeta+i}\right)+\log\left(1-\frac{\overline{z}-i}{\overline{\zeta}-i}\right)+\log|1+\zeta|^{2}\\
&=-\sum_{l=1}^{\infty}\frac{1}{l}\left[\left(\frac{z+i}{\zeta+i}\right)^{l}+\left(\frac{\overline{z}-i}{\overline{\zeta}-i}\right)^{l}\right]+\log|1+\zeta|^{2}\nonumber
\end{align}
when $|\zeta+i|>|z+i|$, where the analytic branch of $\log z$ is that one cutting along the positive real axis with $\log1=0$. So by Definition 2.8,
\begin{equation}
\mathrm{S. P.} [K_{1,1}](z,\zeta)=\begin{cases}0,\,\,\,\,\,as \,\,|z+i|=|\zeta+i|\,\,and\,\,z\neq\zeta;\vspace{4mm}\\
-\frac{1}{\pi}\left\{\sum_{l=1}^{2}\frac{1}{l}\left[\left(\left(\frac{z+i}{\zeta+i}\right)_{<}\right)^{l}
+\left(\left(\frac{\overline{z}-i}{\overline{\zeta}-i}\right)_{<}\right)^{l}\right]-\log\max\left(|1+z|^{2}, |1+\zeta|^{2}\right)\right\}, \\
\hspace{62mm}as\,\,\,|z+i|\neq|\zeta+i|,
\end{cases}
\end{equation}
where $(z)_{<}=\min\{z, z^{-1}\}$ with $z\neq 0$.
Thus
\begin{equation}
\Delta_{z}\mathrm{S. P.} [K_{1,1}](z,\zeta)=\Delta_{\zeta}\mathrm{S. P.} [K_{1,1}](z,\zeta)=0
\end{equation}
for any $z\neq\zeta$ in the sense of classical derivative. Therefore, in the sense of Sobolev derivatives,
\begin{align}
\Delta_{z}\widetilde{K}_{1,1}(z, \zeta)&=\Delta_{z}K_{1,1}(z, \zeta)+\Delta_{z}\mathrm{S. P.} [K_{1,1}](z,\zeta)\\
&=\Delta_{z}K_{1,1}(z, \zeta)\,\,\,\,\big(\!\!=4\delta_{z}(\zeta)\big)\nonumber\\
&=\Delta_{\zeta}K_{1,1}(z, \zeta)\,\,\,\,\big(\!\!=4\delta_{\zeta}(z)\big)\nonumber\\
&=\Delta_{\zeta}K_{1,1}(z, \zeta)+\Delta_{\zeta}\mathrm{S. P.} [K_{1,1}](z,\zeta)\nonumber\\
&=\Delta_{\zeta}\widetilde{K}_{1,1}(z, \zeta)\nonumber,
\end{align}
where $\delta_{z}$ denotes the Dirac function on $\mathbf{H}$ taking the unit mass to the point $z\in \mathbf{H}$.

Since $f\in L^{p}(\mathbf{H})$ with $p>1$, by Lebesgue's dominated convergence theorem, it is easy to know that $\widetilde{T}_{1,1}f\in C_{\mathrm{loc}}(\mathbf{H})$. Then for any $\varphi\in C^{\infty}_{0}(\mathbf{H})$, we have
\begin{align}
\int\int_{\mathbf{H}}\widetilde{T}_{1,1}f(z)\Delta_{z}\varphi(z)dxdy&=\int\int_{\mathbf{H}}\left(\int\int_{\mathbf{H}}\widetilde{K}_{1,1}(z, \zeta)f(\zeta)d\xi d\eta\right)\Delta_{z}\varphi(z)dxdy\\
&=\int\int_{\mathbf{H}}\left(\int\int_{\mathbf{H}}\widetilde{K}_{1,1}(z, \zeta)\Delta_{z}\varphi(z)dxdy\right)f(\zeta)d\xi d\eta\nonumber\\
&=\int\int_{\mathbf{H}}\left(\int\int_{\mathbf{H}}\Delta_{z}\widetilde{K}_{1,1}(z, \zeta)\varphi(z)dxdy\right)f(\zeta)d\xi d\eta\nonumber\\
&=\int\int_{\mathbf{H}}\left(\int\int_{\mathbf{H}}4\delta_{\zeta}(z)\varphi(z)dxdy\right)f(\zeta)d\xi d\eta\nonumber\\
&=\int\int_{\mathbf{H}}4f(\zeta)\varphi(\zeta)d\xi d\eta,\nonumber
\end{align}
where the second equality is justified by Fubini's theorem and Theorem 2.16 since $\Delta_{z}\varphi\in C^{\infty}_{0}(\mathbf{H})\in L_{w}^{p}(\mathbf{H})$ for any weight $w\in W^{q,1,\frac{5}{2}}(\mathbf{H})$ with $q>1$; the third equality is immediately from the integral by parts; and the penultimate equality is obtained by (2.48). However, (2.49) is just the desired (2.44) in the sense of Sobolev derivative.
\end{proof}

\begin{thm}[Nontangential boundary value of $\widetilde{T}_{n,n, \mathbf{H}}f$] Let $\widetilde{T}_{n,n, \mathbf{H}}$ and $f$ be as in the above theorem, then
\begin{equation}
\lim_{\substack{z\rightarrow s\\ z\in\Gamma_{\alpha}(s), s\in \mathbb{R}}}\widetilde{T}_{n,n, \mathbf{H}}f(z)=\widetilde{T}_{n,n, \mathbf{H}}f(s)
\end{equation}
for any $n\in \mathbb{N}$.
\end{thm}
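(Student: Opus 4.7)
The plan is to apply Lebesgue's dominated convergence theorem on the truncated cone $\Gamma_{\alpha}(s)\cap B(s,\delta_{0})$, which is a compact subset of $\overline{\mathbf{H}}$. Fix $s\in\mathbb{R}$ and $\alpha>0$, and decompose $\mathbf{H}=W_{R}\cup V_{R}$ with $W_{R}=\mathbf{H}\setminus B(-i,R)$ the far region and $V_{R}=\mathbf{H}\cap B(-i,R)$ the bounded region, $R$ being chosen so large that the entire truncated cone sits inside $B(-i,R)$. On $W_{R}$, Theorem 2.11 produces the uniform bound $|\widetilde{K}_{n,n}(z,\zeta)|\leq M|\zeta+i|^{-(2+\epsilon)}$ for $z$ in the truncated cone, and H\"older's inequality combined with the standing integrability hypothesis on $f$ (the same $L^{p}_{w_{1}}(\mathbf{H})$-type condition used to make $\widetilde{T}_{n,n,\mathbf{H}}f$ a bona fide function in Theorem 2.16) delivers an integrable, $z$-independent dominant on $W_{R}$.

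On $V_{R}$ I would separate $\widetilde{K}_{n,n}=K_{n,n}-\mathrm{S.P.}[K_{n,n}]$. The singular-part contribution is jointly continuous on $\overline{\mathbf{H}}\times\overline{\mathbf{H}}$: the formula (2.12) involves only $|z+i|$, $|\zeta+i|$, $\cos(\theta-\vartheta)$, together with $\min$, $\max$ and $\log\max$ of these quantities, and for $z,\zeta\in\overline{\mathbf{H}}$ both $z+i$ and $\zeta+i$ lie in $\{\operatorname{Im}\geq 1\}$ so neither vanishes. Hence $\mathrm{S.P.}[K_{n,n}]$ is bounded on (truncated cone)$\times V_{R}$ and straight dominated convergence handles that piece.

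The only genuine difficulty is the convolution piece $\zeta\mapsto K_{n,n}(z-\zeta)$ on $V_{R}$, whose singularity at $\zeta=z$ moves with $z$. I would split $V_{R}$ further into $B(s,\delta)$ and its complement in $V_{R}$. On the complement, $|z-\zeta|$ is uniformly bounded below for $z$ in the cone with $|z-s|<\delta/2$, so $K_{n,n}(z-\zeta)$ is uniformly bounded and dominated convergence applies. On $B(s,\delta)$ the elementary estimate $|K_{n,n}(w)|\leq C(1+|\log|w||)|w|^{2(n-1)}$ places $K_{n,n}$ in $L^{q}(B(0,3\delta))$ for every $q<\infty$, and H\"older's inequality gives
$$\int_{B(s,\delta)}|K_{n,n}(z-\zeta)||f(\zeta)|\,d\xi\,d\eta\;\leq\;\|K_{n,n}\|_{L^{q}(B(0,3\delta))}\,\|f\|_{L^{p}(B(s,\delta))},$$
a bound that is independent of $z$ in the cone and tends to $0$ as $\delta\to 0$. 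A three-epsilon argument combining this uniform smallness with the pointwise limit $K_{n,n}(z-\zeta)\to K_{n,n}(s-\zeta)$ for a.e.\ $\zeta\in\mathbf{H}$ closes the proof.

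The main technical obstacle is precisely this moving-singularity comparison, most visibly when $n=1$ and $K_{1,1}(w)=\pi^{-1}\log|w|$ has an honest logarithmic singularity. The three-epsilon scheme sketched above is the cleanest route, but the bookkeeping has to be done so that the H\"older exponent chosen in the near-field step on $B(s,\delta)$ is compatible with the one used in the far-field step on $W_{R}$, and so that both applications remain within the weight class for which $f$ is assumed integrable.
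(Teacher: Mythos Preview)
Your proposal is correct and follows essentially the same route the paper intends: the paper's own proof of this theorem is not spelled out but merely points to ``the estimate techniques of integral splitting'' via Theorem~2.11 and refers to the analogous argument in \cite{dqw3}; your decomposition into a far region $W_{R}$ (handled by Theorem~2.11 and H\"older) and a near region $V_{R}$ (handled by splitting off $\mathrm{S.P.}[K_{n,n}]$ and then a three--epsilon treatment of the moving logarithmic singularity of $K_{n,n}$) is exactly a detailed realization of that scheme.

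One small remark: the hypothesis inherited from Theorem~2.18 is $f\in L^{p}(\mathbf{H})$, not the weighted class $L^{p}_{w_{1}}(\mathbf{H})$ from Theorem~2.16. Your far--field step works under the plain $L^{p}$ assumption, since $|\zeta+i|^{-(2+\epsilon)}\in L^{q}(W_{R})$ for every $q\in[1,\infty]$, so H\"older applies directly; there is no need to invoke the weight, and the ``compatibility of H\"older exponents'' concern you raise at the end is a non--issue because the near--field and far--field estimates live on disjoint regions and may use independent exponents.
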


\begin{proof}
In virtue of Theorem 2.11, it follows from a similar argument as to the verification for the property 5 of the higher order Poisson kernels defined in \cite{dqw1} in terms of the estimate techniques of integral splitting. Such techniques have been used in the proof of the above Theorem 2.16. The details can be found in the argument of Theorem 2.7 in \cite{dqw3}, and omitted again here.
\end{proof}

\section{Uniqueness of solution for homogeneous PHD problem in the upper half-plane }

In order to get the uniqueness of the solution of the inhomogeneous PHD problem under question, in this section, we will give a certain estimate under which the solution of the corresponding homogeneous PHD problem obtained in \cite{dqw1} is unique. To do so, we need exploit the following standard theorems in classical harmonic analysis.

\begin{lem}[\cite{st}] Let $f\in L^{p}(\mathbb{R})$, $1\leq p\leq \infty$, and $u(z)$ be the Poisson integral of $f$ (in our notations, $u(z)=\frac{1}{\pi}\int_{-\infty}^{+\infty}G_{1}(z,t)f(t)dt$, $z\in \mathbf{H}$),  then
\begin{equation}
\mathrm{M}[u](x_{0})=\sup_{z\in\Gamma_{\alpha}(x_{0})}|u(z)|\leq C_{\alpha} \mathfrak{M}f(x_{0}),
\end{equation}
where $\Gamma_{\alpha}(x_{0})$ is the cone in $\mathbf{H}$ with the vertex at $(x_{0}, 0)$ and the aperture $\alpha$, $x_{0}\in \mathbb{R}$, $\alpha>0$; $C_{\alpha}$ is a positive constant depending only on $\alpha$, $\mathrm{M}[u]$ is the non-tangential maximal function,  and $\mathfrak{M}f$ is the standard Hardy-Littlewood maximal function defined by
\begin{equation}
\mathfrak{M}f(x_{0})=\sup_{\rho>0}\frac{1}{2\rho}\int_{x_{0}-\rho}^{x_{0}+\rho}|f(x)|dx.
\end{equation}
\end{lem}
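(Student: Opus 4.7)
The plan is to reduce the estimate to the standard comparison between the Poisson integral and the Hardy--Littlewood maximal function. Writing $z=x+iy$, the formula in Lemma 2.4 gives the familiar expression $u(z)=\frac{1}{\pi}\int_{\mathbb{R}}\frac{y}{(t-x)^{2}+y^{2}}\,f(t)\,dt$, so the task is to bound $|u(z)|$ uniformly for $z\in\Gamma_{\alpha}(x_{0})$ by a constant (depending only on $\alpha$) times $\mathfrak{M}f(x_{0})$.

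The first step I would carry out is a pointwise comparison between the \emph{off-center} Poisson kernel $P_{y}(x-t)=\frac{1}{\pi}\frac{y}{(t-x)^{2}+y^{2}}$ evaluated at $z=x+iy\in\Gamma_{\alpha}(x_{0})$ and the \emph{on-center} kernel $P_{y}(x_{0}-t)$. Since $|x-x_{0}|<\alpha y$, I would split $\mathbb{R}$ into the two regions $\{|t-x_{0}|\leq 2\alpha y\}$ and $\{|t-x_{0}|>2\alpha y\}$. In the first region both denominators are comparable to $y^{2}$ with constants depending only on $\alpha$. In the second, the triangle inequality yields $|t-x|\geq|t-x_{0}|-|x-x_{0}|\geq\tfrac{1}{2}|t-x_{0}|$, whence $(t-x)^{2}+y^{2}\geq\tfrac{1}{4}\bigl((t-x_{0})^{2}+y^{2}\bigr)$. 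Combining the two cases produces a bound of the form $P_{y}(x-t)\leq C_{\alpha}P_{y}(x_{0}-t)$ uniformly for $z\in\Gamma_{\alpha}(x_{0})$ and $t\in\mathbb{R}$, and therefore
\[
|u(z)|\leq C_{\alpha}\,(P_{y}*|f|)(x_{0}).
\]

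The second step is to bound $(P_{y}*|f|)(x_{0})\leq C\,\mathfrak{M}f(x_{0})$ by the classical dyadic-annulus argument: on $\{2^{k-1}y<|t-x_{0}|\leq 2^{k}y\}$ the kernel satisfies $P_{y}(x_{0}-t)\leq C\cdot 2^{-2k}/y$, while the mass of $|f|$ on $\{|t-x_{0}|\leq 2^{k}y\}$ is at most $2^{k+1}y\,\mathfrak{M}f(x_{0})$. Summing the resulting geometric series in $k\geq 0$ (after handling the central ball $|t-x_{0}|\leq y$ separately in the same way) gives the desired bound. Taking the supremum over $z\in\Gamma_{\alpha}(x_{0})$ and concatenating with the comparison from the first step produces $\mathrm{M}[u](x_{0})\leq C_{\alpha}\mathfrak{M}f(x_{0})$.

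There is no serious obstacle, since this is precisely the classical textbook argument from Stein (the lemma is quoted rather than developed). The only thing requiring a small amount of care is the bookkeeping of the $\alpha$-dependence in the comparability step, which is why the constant $C_{\alpha}$ depends on the aperture but not on $f$ or $x_{0}$.
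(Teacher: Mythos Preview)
Your proposal is correct and is precisely the standard argument from the cited reference. The paper does not supply its own proof of this lemma at all: it is simply quoted as a known result from \cite{st} and used as a black box, so there is nothing further to compare.
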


\begin{lem}[Hardy-Littlewood maximal theorem, see \cite{gar}] Let $f\in L^{p}(\mathbb{R})$, $1\leq p\leq \infty$, then $\mathfrak{M}f$ is finite almost everywhere on $\mathbb{R}$ . Moreover,
\begin{itemize}
  \item [(1)] If $f\in L^{1}(\mathbb{R})$, then $\mathfrak{M}f$ is in weak-$L^{1}(\mathbb{R})$ (which is usually denoted by $L^{1,\infty}(\mathbb{R})$), more precisely,
  \begin{equation}
  |\{x\in \mathbb{R}: \mathfrak{M}f(x)>\lambda\}|\leq \frac{2}{\lambda}\|f\|_{L^{1}(\mathbb{R})};
  \end{equation}
  \item [(2)] If $f\in L^{p}(\mathbb{R})$, $1<p\leq\infty$, then
  \begin{equation}
  \|\mathfrak{M}f\|_{L^{p}(\mathbb{R})}\leq A_{p}\|f\|_{L^{p}(\mathbb{R})},
  \end{equation}
  where $|E|$ denotes Lebesque's measure of the set $E$, $A_{p}$ is a constant depending only on $p$, $\mathfrak{M}f$ and $\alpha$ be as in the above lemma.
\end{itemize}
\end{lem}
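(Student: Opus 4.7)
The plan is to establish the two parts by first proving the weak-type $(1,1)$ inequality in part (1) via a Vitali-type covering argument, then to observe that the trivial strong-$(\infty,\infty)$ endpoint $\|\mathfrak{M}f\|_{L^{\infty}(\mathbb{R})}\leq\|f\|_{L^{\infty}(\mathbb{R})}$ is immediate from the definition, and finally to obtain the strong-$(p,p)$ estimate in part (2) for $1<p<\infty$ by invoking the Marcinkiewicz interpolation theorem between these two endpoints. The almost-everywhere finiteness of $\mathfrak{M}f$ for $p<\infty$ will then follow from the resulting weak (or strong) $L^{p}$ bound, since a function in weak-$L^{p}$ is finite a.e.

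For part (1), the scheme is standard. Fix $\lambda>0$ and set $E_{\lambda}=\{x\in\mathbb{R}:\mathfrak{M}f(x)>\lambda\}$. For each $x\in E_{\lambda}$ choose $\rho_{x}>0$ with
\begin{equation*}
\frac{1}{2\rho_{x}}\int_{x-\rho_{x}}^{x+\rho_{x}}|f(t)|\,dt>\lambda,
\end{equation*}
so that the interval $I_{x}=(x-\rho_{x},x+\rho_{x})$ satisfies $\lambda|I_{x}|<\int_{I_{x}}|f|\leq\|f\|_{L^{1}(\mathbb{R})}$, giving a uniform upper bound on the lengths of the $I_{x}$. A Vitali-type selection lemma then extracts a countable, pairwise disjoint subfamily $\{I_{x_{j}}\}$ whose threefold enlargements $\{3I_{x_{j}}\}$ still cover $E_{\lambda}$, and disjointness together with the defining inequality yields
\begin{equation*}
|E_{\lambda}|\leq 3\sum_{j}|I_{x_{j}}|<\frac{3}{\lambda}\sum_{j}\int_{I_{x_{j}}}|f|\leq\frac{3}{\lambda}\|f\|_{L^{1}(\mathbb{R})}.
\end{equation*}
To sharpen the factor $3$ down to the stated constant $2$, one would replace the naive Vitali argument by the one-dimensional ``rising sun'' lemma of F.~Riesz, which in $\mathbb{R}$ produces a decomposition of $E_{\lambda}$ into disjoint maximal intervals on which the average of $|f|$ is exactly $\lambda$; this refinement is what gives the optimal constant in dimension one.

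For part (2), the endpoint $p=\infty$ is trivial because every average of $|f|$ is bounded by $\|f\|_{L^{\infty}(\mathbb{R})}$. With the weak-$(1,1)$ bound from step one and this $L^{\infty}$ bound in hand, the Marcinkiewicz interpolation theorem applies to the sublinear operator $\mathfrak{M}$ and delivers, for every $1<p<\infty$, the inequality
\begin{equation*}
\|\mathfrak{M}f\|_{L^{p}(\mathbb{R})}\leq A_{p}\|f\|_{L^{p}(\mathbb{R})}
\end{equation*}
with a constant $A_{p}$ depending only on $p$ (and degenerating as $p\to 1^{+}$, as it must since $\mathfrak{M}$ is not strong-$(1,1)$; one sees this by testing on any nonzero $L^{1}$ bump, whose maximal function decays only like $|x|^{-1}$ and is not $L^{1}$). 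The main technical step is the covering argument for the weak-$(1,1)$ bound; after that the $L^{\infty}$ bound and the Marcinkiewicz splitting $f=f\chi_{\{|f|>\lambda\}}+f\chi_{\{|f|\leq\lambda\}}$ followed by integration of the distribution function in $\lambda$ are routine and yield the stated inequalities.
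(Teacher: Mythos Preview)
The paper does not prove this lemma at all; it is simply quoted as a classical result with a pointer to Garnett's book, so there is no ``paper's own proof'' to compare against. Your proposal is the standard textbook argument (Vitali covering for the weak-$(1,1)$ bound, trivial $L^{\infty}$ endpoint, Marcinkiewicz interpolation for $1<p<\infty$) and is correct as an outline; your remark that the sharp constant $2$ in dimension one requires the rising-sun lemma rather than the generic Vitali selection is also accurate. In the context of this paper, however, supplying any proof here is unnecessary: the lemma is invoked only as a black box to derive Corollary~3.3, and the authors clearly intend the reader to take it on faith from the reference.
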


By Lemmas 3.1-3.2, immediately, we have
\begin{cor}
Let $\mathrm{M}$, $\alpha$ and $u$ be as above, then
\begin{equation}
\|\mathrm{M}[u]\|_{L^{p}(\mathbb{R})}\leq C_{p,\alpha}\|f\|_{L^{p}(\mathbb{R})}
\end{equation}
for any $f\in L^{p}(\mathbb{R})$ with $1< p\leq\infty$, where $C_{p,\alpha}$ is a constant depending only on $p,\alpha$. Moreover,
\begin{equation}
  |\{x\in \mathbb{R}: \mathrm{M}[u](x)>\lambda\}|\leq \frac{2C_{\alpha}}{\lambda}\|f\|_{L^{1}(\mathbb{R})}
  \end{equation}
  for any $f\in L^{1}(\mathbb{R})$, and for any $f\in L^{p}(\mathbb{R})$, $1\leq p\leq\infty$, $\mathrm{M}[u]$ is finite almost everywhere on $\mathbb{R}$, $C_{\alpha}$ is the same as in Lemma 3.1.
\end{cor}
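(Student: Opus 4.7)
The plan is to chain together the two preceding lemmas in a completely mechanical way, since Corollary 3.3 is essentially a packaging of Lemma 3.1 with Lemma 3.2. The pointwise majorization $\mathrm{M}[u](x_{0})\leq C_{\alpha}\mathfrak{M}f(x_{0})$ supplied by Lemma 3.1 is the only ingredient that ties $u$ back to $f$; everything else is a direct transfer of maximal-function estimates.

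First, for the strong-type bound (3.5) with $1<p\leq\infty$, I would take the $L^{p}(\mathbb{R})$ norm of both sides of the Lemma 3.1 inequality. Monotonicity of the $L^{p}$ norm gives $\|\mathrm{M}[u]\|_{L^{p}(\mathbb{R})}\leq C_{\alpha}\|\mathfrak{M}f\|_{L^{p}(\mathbb{R})}$, and then Lemma 3.2(2) yields $\|\mathfrak{M}f\|_{L^{p}(\mathbb{R})}\leq A_{p}\|f\|_{L^{p}(\mathbb{R})}$, so setting $C_{p,\alpha}=C_{\alpha}A_{p}$ gives (3.5). The case $p=\infty$ is even simpler because $\mathfrak{M}f\leq\|f\|_{L^{\infty}(\mathbb{R})}$ trivially.

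Second, for the weak-type bound (3.6), I would again start from the pointwise inequality in Lemma 3.1, which entails the set inclusion
\begin{equation*}
\{x\in\mathbb{R}:\mathrm{M}[u](x)>\lambda\}\subseteq\left\{x\in\mathbb{R}:\mathfrak{M}f(x)>\tfrac{\lambda}{C_{\alpha}}\right\}.
\end{equation*}
Taking Lebesgue measure and applying Lemma 3.2(1) to the right-hand set with threshold $\lambda/C_{\alpha}$ gives the desired estimate $|\{\mathrm{M}[u]>\lambda\}|\leq\frac{2C_{\alpha}}{\lambda}\|f\|_{L^{1}(\mathbb{R})}$.

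Finally, the finiteness almost everywhere of $\mathrm{M}[u]$ for every $f\in L^{p}(\mathbb{R})$, $1\leq p\leq\infty$, follows at once: for $p=1$, part (2) just proved implies $|\{\mathrm{M}[u]=+\infty\}|=0$ by letting $\lambda\to+\infty$; for $1<p<\infty$ it is immediate from the integrability of $\mathrm{M}[u]^{p}$ given by (3.5); and for $p=\infty$ the boundedness of $\mathrm{M}[u]$ is obvious. There is no real obstacle in this argument — the only thing to be a little careful about is the bookkeeping of the constants $C_{\alpha}$ and $A_{p}$, and the pro forma inclusion step that converts Lemma 3.1's pointwise inequality into a level-set containment for the weak-type piece.
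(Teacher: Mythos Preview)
Your proposal is correct and matches the paper's approach exactly: the paper simply states that the corollary follows ``immediately'' from Lemmas 3.1--3.2, and your argument is precisely the mechanical chaining of the pointwise bound $\mathrm{M}[u]\leq C_{\alpha}\mathfrak{M}f$ with the strong- and weak-type maximal inequalities that this phrase implicitly invokes.
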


Now we can give the existence and uniqueness of solution to the corresponding homogeneous PHD problem associated with the inhomogeneous PHD problem (1.1).

\begin{thm} Let $\{G_n(z,t)\}^{\infty}_{n=1}$ be the sequence of higher order Poisson kernels defined on $\mathbf{H}\times \mathbb{R}$, which are defined in Definition 2.1 and explicitly expressed in Lemma 2.5, then for any $n\geq1$, the following homogeneous PHD problem
\begin{equation}
\begin{cases}
\Delta^{n}u=0\,\,\,in\,\,\mathbf{H},\\
\Delta^{j}u=f_{j}\,\,\,on\,\,\mathbb{R},
\end{cases}
\end{equation}
where $f_{j}\in L^{p}(\mathbb{R})$, $0\leq j\leq n-1$, $1<p<\infty$, and the boundary data are non-tangential, is solvable and a solution is
\begin{equation}u(z)=\sum\limits^{n}_{j=1}\frac{4^{1-j}}{\pi}\int^{+\infty}_{-\infty}G_{j}(z,t)f_{j-1}(t)dt\triangleq\sum\limits^{n}_{j=1}M_{j}f_{j-1}(z)\end{equation}
which satisfying that
\begin{equation}
\|\mathrm{M}[u-\sum\limits^{n}_{j=2}M_{j}f_{j-1}]\|_{L^{p}(\mathbb{R})}\leq C_{p,\alpha}\|f_{0}\|_{L^{p}(\mathbb{R})},
\end{equation}
where $C_{p,\alpha}$ is a constant depending only on $p$ and $\alpha$, the operator $M_{j}$ is called to be the $j$-th order Poisson integral operator defined by
\begin{equation}
M_{j}f(z)=\frac{4^{1-j}}{\pi}\int^{+\infty}_{-\infty}G_{j}(z,t)f(t)dt
\end{equation}
for some appropriate $f$. Moreover, the solution (3.8) is unique under the estimate of type (3.9).
\end{thm}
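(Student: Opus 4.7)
The plan is three-fold: (i) verify that the candidate $u(z)=\sum_{j=1}^{n}M_{j}f_{j-1}(z)$ solves (3.7), (ii) derive the maximal function bound (3.9) by recognizing the residual term as a classical Poisson integral, and (iii) establish uniqueness by reducing to a pure vanishing statement about polyharmonic functions with zero nontangential boundary data and $L^{p}$ maximal function, which is then handled via Lemma~2.2.

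For existence I would differentiate $u$ term by term, justified by the regularity and decay properties of $G_{j}(\cdot,t)$ granted in Definition~2.1. Property~(3) together with the normalization constant $4^{1-j}$ yields $\Delta M_{j}f=M_{j-1}f$ for $j\geq 2$ and $\Delta M_{1}f=0$, so iterating gives $\Delta^{n}u\equiv 0$ in $\mathbf{H}$. On $\mathbb{R}$, the terms of $\Delta^{j}u$ with index $k\leq j$ already vanish identically in $\mathbf{H}$; the term $k=j+1$ collapses to $M_{1}f_{j}$ and returns $f_{j}$ nontangentially by property~(4); each remaining term has zero nontangential boundary value by property~(5). The maximal function inequality is then immediate since $u-\sum_{j=2}^{n}M_{j}f_{j-1}=M_{1}f_{0}$ is the classical Poisson integral of $f_{0}$, to which Corollary~3.3 applies directly.

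The substance of the theorem lies in uniqueness. Given two solutions $u_{1},u_{2}$ both satisfying an estimate of type (3.9), the difference $v=u_{1}-u_{2}$ is $n$-harmonic in $\mathbf{H}$, vanishes nontangentially on $\mathbb{R}$ together with $\Delta v,\dots,\Delta^{n-1}v$, and satisfies $\mathrm{M}[v]\in L^{p}(\mathbb{R})$; the goal is $v\equiv 0$. For $n=1$ this is standard $L^{p}$ Fatou theory: $v$ belongs to the harmonic Hardy space $h^{p}(\mathbf{H})$ and coincides with the Poisson integral of its (vanishing) nontangential boundary values. For $n\geq 2$, I would invoke Lemma~2.2 at $z_{0}=i$ to write $v(z)=2\mathrm{Re}\bigl\{\sum_{j=0}^{n-1}(\bar{z}+i)^{j}v_{j}(z)\bigr\}$ with each $v_{j}$ analytic in $\mathbf{H}$ and vanishing to order at least $j$ at $i$. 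Applying $\Delta^{k}$ for $k=n-1,n-2,\dots,0$ converts the vanishing of $\Delta^{k}v$ on $\mathbb{R}$ into a triangular system for the $v_{j}$; the top equation isolates $v_{n-1}$, and the explicit polynomial factor $(\bar z+i)^{n-1}$ lets the $L^{p}$ bound on $\mathrm{M}[v]$ descend to analogous control on $v_{n-1}$, placing it in a classical $H^{p}$ class with zero boundary trace, hence $v_{n-1}\equiv 0$. Peeling off this term and repeating annihilates the remaining analytic pieces one at a time.

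The main obstacle is that the hypothesis controls only $\mathrm{M}[v]$ in $L^{p}(\mathbb{R})$ and not the nontangential maximal functions of the iterated Laplacians $\Delta^{k}v$, which blocks any naive induction on $n$ by passing to $w=\Delta v$. The analytic decomposition in Lemma~2.2 is precisely the device that circumvents this difficulty: it recasts the polyharmonic uniqueness problem as a nested sequence of classical $H^{p}$ uniqueness problems, each accessible from the single maximal function bound together with the vanishing of finitely many Laplacian traces on $\mathbb{R}$.
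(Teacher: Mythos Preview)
Your existence argument and the derivation of (3.9) are correct and match the paper: both simply observe that $u-\sum_{j=2}^{n}M_{j}f_{j-1}=M_{1}f_{0}$ is the ordinary Poisson integral, so Corollary~3.3 applies.

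For uniqueness, however, you are working much harder than the paper does, and for a reason worth noting. The paper reads ``unique under the estimate of type (3.9)'' as follows: if $v$ is another solution of (3.7), then $w:=u-v$ solves the homogeneous problem (3.11) with all boundary data equal to zero, and \emph{the estimate of type (3.9) for this new problem} is simply $\|\mathrm{M}[w]\|_{L^{p}(\mathbb{R})}\le C_{p,\alpha}\cdot 0=0$. From the definition of the nontangential maximal function this forces $w\equiv 0$. The uniqueness is therefore essentially built into the formulation of the estimate; no structural analysis of polyharmonic functions is invoked.

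Your interpretation is different and more ambitious: you take two solutions each satisfying (3.9) with the \emph{same} nonzero data, deduce only that $\mathrm{M}[u_{1}-u_{2}]\in L^{p}(\mathbb{R})$, and then attempt a genuine Liouville-type theorem for $n$-harmonic functions with vanishing iterated-Laplacian traces and $L^{p}$ maximal function. That would be a stronger result, but your sketch has a real gap at the key step. You assert that the factor $(\bar z+i)^{n-1}$ in the decomposition of Lemma~2.2 lets the bound $\mathrm{M}[v]\in L^{p}$ ``descend'' to $H^{p}$-type control on the top analytic component $v_{n-1}$. Since $|\bar z+i|^{n-1}$ \emph{grows} at infinity, control on the full sum $v$ does not obviously localize to control on the individual $v_{j}$; nor do you have any maximal-function bound on $\Delta^{n-1}v$ that would place $v_{n-1}^{(n-1)}$ in a Hardy class. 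Without that, the ``peeling'' argument does not start. If you want to pursue this route you would need an independent mechanism (e.g.\ weighted estimates or interior regularity combined with the boundary vanishing of all $\Delta^{k}v$) to transfer size control from $v$ to its analytic pieces; as written, the proposal does not supply one.
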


\begin{proof}
The existence of solution of the form as (3.8) was the main result in \cite{dqw1}. To the solution (3.8), the estimate (3.9) readily follows from Corollary 3.3.
Now turn to the uniqueness. Suppose that $v$ is another solution of the homogeneous PHD problem (3.7) under the estimate of type (3.9), then $u-v$ is a solution of the following homogeneous PHD problem with vanishing boundary data as follows
\begin{equation}
\begin{cases}
\Delta^{n}u=0\,\,\,in\,\,\mathbf{H},\\
\Delta^{j}u=0\,\,\,on\,\,\mathbb{R}.
\end{cases}
\end{equation}
Now the corresponding estimate of type (3.9) associated with $u-v$ is that $\|\mathrm{M}[u]\|_{L^{p}(\mathbb{R})}\leq 0$. So by the definition of $\mathrm{M}[u]$, the desired conclusion,  that $u=0$ a.e. on $\mathbf{H}$, follows immediately.
\end{proof}

\section{Inhomogeneous PHD problem in the upper half-plane}

With the above preliminaries, we can conclude with the main result of the present paper in this section. That is the following

\begin{thm}
Let $\widetilde{T}_{j, \mathbf{H}}$, $M_{j}$, $\mathrm{M}$ and $\alpha$ be as before, $1\leq j\leq n$, then for any $f_{l}\in L^{p}(\mathbb{R})$ and $g\in L_{w}^{p}(\mathbf{H})$ with $w\in W^{p,n,\frac{3}{2}}(\mathbf{H})$, $0\leq l\leq n-1$, the inhomogeneous PHD problem (1.1) is solvable and a solution is given by
\begin{equation}u(z)=\frac{1}{4^{n}}\widetilde{T}_{n,\mathbf{H}}g(z)+\sum\limits_{j=1}^{n}M_{j}\left(f_{j-1}-\frac{1}{4^{n+1-j}}\widetilde{T}_{n+1-j, \mathbf{H}}g\right)(z) \end{equation}
which satisfying the following estimate
\begin{equation}\|{\rm{M}}[u-\frac{1}{4^{n}}\widetilde{T}_{n,\mathbf{H}}g-\sum_{j=2}^{n}M_{j}(f_{j-1}-\frac{1}{4^{n+1-j}}\widetilde{T}_{n+1-j, \mathbf{H}}g)]\|_{L^{p}(\mathbb{R})}\leq C\left(\|f_{0}\|_{L^p(\mathbb{R})}+\|g\|_{L_{w}^{p}(\mathbf{H})}\right),\end{equation} where $C$ is a constant depending only on $n,p$ and $\alpha$.  Moreover, the solution (4.1) is unique under such typed estimate (4.2).\end{thm}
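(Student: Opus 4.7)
The proof splits into four tasks: verifying the PDE $\Delta^{n}u=g$ in $\mathbf{H}$, verifying the boundary conditions $\Delta^{j}u=f_{j}$ on $\mathbb{R}$ for $0\le j\le n-1$, deriving the estimate (4.2), and establishing uniqueness under (4.2). The strategy is transparent: the piece $\frac{1}{4^{n}}\widetilde{T}_{n,\mathbf{H}}g$ is a particular solution that absorbs the inhomogeneity, while the polyharmonic correction $\sum_{j=1}^{n}M_{j}\bigl(f_{j-1}-\tfrac{1}{4^{n+1-j}}\widetilde{T}_{n+1-j,\mathbf{H}}g\bigr)$ is engineered so that each application of $\Delta^{j}$ followed by a non-tangential limit produces the $\widetilde{T}$-trace required to cancel the one coming from $\Delta^{j}$ of the particular solution, leaving precisely $f_{j}$.

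To verify the PDE, I iterate Theorem 2.18: $n$ applications of the Laplacian to $\frac{1}{4^{n}}\widetilde{T}_{n,\mathbf{H}}g$ yield $g$ (the last step invoking the Sobolev identity $\Delta\widetilde{T}_{1,\mathbf{H}}g=4g$), while every $M_{j}(\cdot)$ with $j\le n$ is $n$-harmonic by property (3) of Definition 2.1, so $\Delta^{n}u=g$ in $\mathbf{H}$. For the boundary conditions, apply $\Delta^{j}$ term-by-term. Property (3) of the higher order Poisson kernels shows that $\Delta^{j}M_{k}(\cdot)=0$ when $k\le j$, while for $k>j$ the Laplacians convert $M_{k}$ into a (constant multiple of a) lower order Poisson integral $M_{k-j}$. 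Passing to the non-tangential limit and using properties (4)--(5) of Definition 2.1 (so that $M_{1}$ acts as the identity on the boundary, and every $M_{\ell}$ with $\ell\ge 2$ produces zero) together with Theorem 2.19 (for the non-tangential boundary value of $\widetilde{T}_{n-j,\mathbf{H}}g$), the only surviving contribution to the trace of $\Delta^{j}u$ comes from the $k=j+1$ summand, and the $\widetilde{T}_{n-j,\mathbf{H}}g$-piece inside its argument exactly cancels the $\widetilde{T}$-trace arising from $\Delta^{j}\bigl(\tfrac{1}{4^{n}}\widetilde{T}_{n,\mathbf{H}}g\bigr)$. What remains on $\mathbb{R}$ is $f_{j}$.

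For the estimate, I observe the telescoping identity
\begin{equation*}
u-\tfrac{1}{4^{n}}\widetilde{T}_{n,\mathbf{H}}g-\sum_{j=2}^{n}M_{j}\bigl(f_{j-1}-\tfrac{1}{4^{n+1-j}}\widetilde{T}_{n+1-j,\mathbf{H}}g\bigr)=M_{1}\bigl(f_{0}-\tfrac{1}{4^{n}}\widetilde{T}_{n,\mathbf{H}}g\bigr),
\end{equation*}
so (4.2) reduces to controlling the non-tangential maximal function of an ordinary Poisson integral. Corollary 3.3 gives $\|\mathrm{M}[M_{1}h]\|_{L^{p}(\mathbb{R})}\le C_{p,\alpha}\|h\|_{L^{p}(\mathbb{R})}$, and the triangle inequality combined with the $L^{p}$-boundedness $\widetilde{T}_{n,\mathbf{H}}:L_{w}^{p}(\mathbf{H})\to L^{p}(\mathbb{R})$ from Theorem 2.16 (which is exactly why the weight hypothesis $w\in W^{p,n,3/2}(\mathbf{H})$ is imposed) then yields (4.2) with $C=C(n,p,\alpha)$.

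Uniqueness follows by reduction to Theorem 3.4: if $u,v$ are two solutions satisfying (4.2), then $w=u-v$ solves the homogeneous PHD problem (3.7) with $f_{j}\equiv 0$, and the difference of the two estimates (4.2) gives $\|\mathrm{M}[w]\|_{L^{p}(\mathbb{R})}=0$, so Theorem 3.4 forces $w\equiv 0$ in $\mathbf{H}$. The main obstacle is purely bookkeeping: one must track the constants $4^{1-j}$ (built into $M_{j}$), $\tfrac{1}{4^{n+1-j}}$ (attached to the $\widetilde{T}$-subtraction inside $M_{j}$), and the factor of $4$ produced by each application of $\Delta$ to $\widetilde{T}_{k,\mathbf{H}}$ coming from Theorem 2.18, and then check that the cancellation of $\widetilde{T}$-traces on the boundary is exact at every level $0\le j\le n-1$. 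Once this cancellation is verified for a generic $j$, the remainder of the argument is a routine combination of Theorems 2.16, 2.18, 2.19 and Corollary 3.3.
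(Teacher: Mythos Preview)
Your proof is correct and uses the same ingredients as the paper (Theorems 2.16, 2.18, 2.19, Corollary 3.3, and Theorem 3.4), but the organization differs slightly. The paper argues constructively: it sets $v=u-\tfrac{1}{4^{n}}\widetilde{T}_{n,\mathbf{H}}g$, observes via Theorems 2.18 and 2.19 that $v$ must solve the homogeneous PHD problem (4.3) with boundary data $f_{j}-\tfrac{1}{4^{n-j}}\widetilde{T}_{n-j,\mathbf{H}}g$, checks via Theorem 2.16 (and the nesting of weight classes in Remark 2.15) that these data lie in $L^{p}(\mathbb{R})$, and then invokes Theorem 3.4 once to obtain the formula for $v$, the estimate, and uniqueness simultaneously. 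You instead take the formula (4.1) as given and verify the PDE, the boundary conditions, the estimate, and uniqueness separately; in particular your telescoping identity is exactly what the paper obtains automatically from the substitution $v=u-\tfrac{1}{4^{n}}\widetilde{T}_{n,\mathbf{H}}g$ and the form of $v$ coming from Theorem 3.4. The paper's packaging is tidier (one citation of Theorem 3.4 replaces your term-by-term boundary analysis), while your direct verification makes the cancellation mechanism at each boundary level more explicit; substantively the arguments coincide.
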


\begin{proof}
Since $g\in L^{p}_{w}(\mathbf{H})\subset L^{p}(\mathbf{H})$ with $w\in W^{p,n,\frac{3}{2}}(\mathbf{H})$ (see Remark 2.17), let $h=\widetilde{T}_{n, \mathbf{H}}g$, then by Theorem 2.18, $\Delta^{j}h=4^{j}\widetilde{T}_{n-j, \mathbf{H}}g$ (in classical sense) and $\Delta^{n}h=4^{n}g$ (in Sobolev sense) in $\mathbf{H}$, $1\leq j\leq n-1$. Set $v=u-\frac{1}{4^{n}}h$, by these facts and Theorem 2.19, so $v$ satisfies the following homogeneous PHD problem
\begin{equation}
\begin{cases}
\Delta^{n}v=0 \ \ in \ \textbf{H},\\
\Delta^{j}v=f_{j}-\frac{1}{4^{n-j}}\widetilde{T}_{n-j, \mathbf{H}}g \ on \ \mathbb{R},\\
\end{cases}
\end{equation}
where $0\leq j\leq n-1$. Noting Remark 2.15, by invoking Theorem 2.16, we know that
\begin{equation}
\|\widetilde{T}_{n-j, \mathbf{H}}g\|_{L^{p}(\mathbb{R})}\leq C\|g\|_{L_{w}^{p}(\mathbf{H})}
\end{equation}
for any $0\leq j\leq n-1$ and $g\in L^{p}_{w}(\mathbf{H})$ with $w\in W^{p,n,\frac{3}{2}}(\mathbf{H})$, where $C$ are some constants depending only on $n,j$ and $p$. So by Theorem 3.4, a solution of (4.3) can be given by
\begin{align}v(z)&=\sum\limits^{n}_{j=1}\frac{4^{1-j}}{\pi}\int^{+\infty}_{-\infty}G_{j}(z,t)\left[f_{j-1}(t)-\frac{1}{4^{n+1-j}}\widetilde{T}_{n+1-j, \mathbf{H}}g\right]dt\\
&=\sum\limits^{n}_{j=1}M_{j}\left[f_{j-1}(t)-\frac{1}{4^{n+1-j}}\widetilde{T}_{n+1-j, \mathbf{H}}g\right](z),\nonumber\end{align}
and it is unique under the following estimate
\begin{equation}
\|\mathrm{M}[v-\sum\limits^{n}_{j=2}M_{j}(f_{j-1}-\frac{1}{4^{n+1-j}}\widetilde{T}_{n+1-j, \mathbf{H}}g)]\|_{L^{p}(\mathbb{R})}\leq C_{p,\alpha}\|f_{0}-\frac{1}{4^{n}}\widetilde{T}_{n, \mathbf{H}}g\|_{L^{p}(\mathbb{R})}.
\end{equation}
Therefore, by (4.4), the unique solution $u$ of the problem (1.1) under the estimate of type (4.2) is given by (4.1). Thus the theorem completes.
\end{proof}

\begin{rem}[The estimate in the case of $w\in W^{p,n,\frac{5}{2}}(\mathbf{H})$] By (2.26) in Theorem 2.16, the solution (4.1) also satisfies the following estimate
 \begin{equation}\|u-\sum_{j=1}^{n}M_{j}(f_{j-1}-\frac{1}{4^{n+1-j}}\widetilde{T}_{n+1-j, \mathbf{H}}g)]\|_{L^{p}(\mathbf{H})}\leq C\|g\|_{L_{w}^{p}(\mathbf{H})},\end{equation}
where $C$ is a constant depending only on $n$ and $p$. It is easy to obtain that the solution (4.1) is also unique under the estimate of type (4.7).
\end{rem}

\begin{rem}[Green functions for polyharmonic operators on $\mathbf{H}$] Rewrite (4.1) as
\begin{equation}u(z)=\Big[\frac{1}{4^{n}}\widetilde{T}_{n,\mathbf{H}}-\frac{1}{4^{n+1-j}}\sum\limits_{j=1}^{n}M_{j}\widetilde{T}_{n+1-j, \mathbf{H}}\Big]g(z)+\sum\limits_{j=1}^{n}M_{j}f_{j-1}(z), \end{equation}
so from the first term in RHS of (4.8), we obtain a Green function for $\Delta^{n}$ on $\mathbf{H}$ by
\begin{align}
\mathcal{G}_{n}(z,\zeta)&=\frac{1}{4^{n}}\Big[\widetilde{K}_{n,n}(z, \zeta)-\frac{1}{\pi}\sum\limits^{n}_{j=1}\widetilde{K}_{n+1-j,n=+1-j}\int_{\mathbb{R}}G_{j}(z,t)\widetilde{K}_{n+1-j,n+1-j}(t, \zeta)dt\Big]\\
&=\frac{1}{4^{n}}\Big[\widetilde{K}_{n,n}(z, \zeta)-\frac{4^{j-1}}{\pi}\sum\limits^{n}_{j=1}M_{j}\widetilde{K}_{n+1-j,n+1-j}(z, \zeta)\Big].\nonumber\end{align} By noting the estimates of $G_{n}$ and $\widetilde{K}_{n,n}$, stated respectively in Definition 2.1 (i.e., the property (2)) and Theorem 2.11, we know that the integrals in this formula are absolutely convergent for arbitrary $z,\zeta\in \mathbf{H}$ and $z\neq\zeta$. It is easy to get that $\mathcal{G}_{n}(z,\zeta)= \mathcal{G}_{n}(\zeta,z)$ for $z,\zeta\in \mathbf{H}$ with $z\neq \zeta$, and $\mathcal{G}_{n}(\cdot,\zeta)=0$ on $\mathbb{R}$ for any $\zeta\in \mathbf{H}$.
\end{rem}


\end{document}